\documentclass{amsart}

\usepackage{amssymb, amsmath, amsthm, braket}

\theoremstyle{plain}
  \newtheorem{thm}{Theorem}[section]
  \newtheorem{defi}[thm]{Definition}
  
  \newtheorem{prop}[thm]{Proposition}
  \newtheorem{cor}[thm]{Corollary}
  \newtheorem{lemma}[thm]{Lemma}
\theoremstyle{definition}
  \newtheorem{ex}[thm]{Example}
  \newtheorem{rem}[thm]{Remark}
  
\newcommand{\R}{\mathbb{R}}
\newcommand{\PP}{\mathbb{P}}
\newcommand{\C}{\mathbb{C}}
\newcommand{\K}{\mathbb{R}}
\newcommand{\p}{\partial}
\newcommand{\dimens}{\mathrm{dim}}
\newcommand{\ev}{\mathit{ev}}
\newcommand{\mV}{\mathcal{V}}
\newcommand{\bpm}{\begin{pmatrix}}
\newcommand{\epm}{\end{pmatrix}}
\newcommand{\skp}[2]{{\Braket{{#1},{#2}}}}

\begin{document}

\title{Jointly orthogonal polynomials}
\author{Giovanni Felder}
\address{Department of mathematics,
ETH Zurich, 8092 Zurich, Switzerland}
\email{felder@math.ethz.ch}

\author{Thomas Willwacher}
\address{Department of mathematics,
ETH Zurich, 8092 Zurich, Switzerland}
\email{thomas.willwacher@math.ethz.ch}

\thanks{The authors are supported in part by the Swiss National Science Foundation, Grant PDAMP2\_137151.}

\begin{abstract}
The theory of polynomials orthogonal with respect to one inner product is classical. We discuss the extension of this theory to multiple inner products.
Examples include the Lam\'e and Heine-Stieltjes polynomials.
%There are several differential equations that have polynomial solutions orthogonal with respect to two inner products. 
% Lam\'e polynomials are classically defined as polynomial solutions of the Lam\'e differential
% equation. We give a different characterization, without reference to differential equations,
% as polynomials that are simultaneously orthogonal with respect to two inner products. We also extend these results to  Heine-Stieltjes polynomials which satisfy an orthogonality relation with respect to possibly more than two inner products.
\end{abstract}
\maketitle

\section{Introduction}
Let $\skp{}{}$ be an inner product on the space of polynomials $\R[x]$.
By the Gram-Schmidt process we may obtain a basis $E_0,E_1,\dots$ of $\R[x]$ orthogonal with respect to $\skp{}{}$. This basis, which we call a Gram-Schmidt basis, has the defining property that $E_n$ is of degree $n$ and is orthogonal to the space $V_{n-1}\subset \R[x]$ of polynomials of degree at most $n-1$.
In the most relevant cases the inner product has the form 
\[
\skp{f}{g} = \int_I f(x)g(x) w(x) dx,
\]
where $I\subset \R$ is a (possibly unbounded) interval, and $w(x)$ is an integrable almost everywhere positive weight function on $I$ for which all moments exist.

The classical orthogonal polynomials provide examples of Gram-Schmidt bases, which are in addition solutions of a polynomial differential equation $Lf=\lambda f$.
%eigenfunctions for a polynomial differential operator $D$ self-adjoint with respect to the inner product. %, and the $E_\alpha$ may equivalently be defined as the eigenfunctions of $D$. 
Concretely, for a bounded interval $I=(-1,1)$ and $w(x)=(1-x)^\alpha(1+x)^\beta$ one obtains the Jacobi polynomials, with $Lf=(1-x^2)f''+(\beta-\alpha-(\alpha+\beta+2)x)f'$. For a semi-bounded interval $I=(0,\infty)$ and $w(x)=x^\alpha e^{-x}$ one obtains the (associated) Laguerre polynomials, with $Lf=xf''+(\alpha+1-x)f'$.
For $I=(-\infty, \infty)$, $w(x)=e^{-x^2/2}$ one obtains the Hermite polynomials, with $Lf=f''-2xf'$.

In this paper we consider the generalization of the Gram-Schmidt process to $k\geq 1$ inner products 
\begin{equation}\label{equ:innerprodj}
 \skp{f}{g}_j = \int_{I_j} f(x)g(x) w_j(x) dx,
\end{equation}
where $j=1,\dots, k$ and $I_1,\dots ,I_k \subset \R$ are disjoint intervals and the $w_j$ are integrable, almost everywhere positive weight functions, for which all moments exist. 
In this situation it is not immediately obvious how to generalize the defining property $E_n\in V_{n-1}^\perp$ of the Gram-Schmidt basis. It turns out that a good generalization is what we call a \emph{jointly orthogonal system}.
To define it let us introduce the following notation. We denote by $V_n\subset \R[x]$ the space of polynomials of degree at most $n$. Let $V^k=S^k\R[x]$ be the space of symmetric polynomials in $k$ variables. Let $V_n^k=S^kV_n\subset V^k$ be the subspace of polynomials for which the degreee in each variable does not exceed $n$. For a polynomial $p\in \R[x]$ we let 
\begin{equation}\label{equ:prtimes}
p^{(r)} := \underbrace{p\otimes \cdots \otimes p}_{r \times} \in V^r.
\end{equation}
In particular we set $p^{(0)}:=1\in V^0\cong\R$. We call symmetric polynomials of the form $p^{(r)}$ \emph{rank one tensors}.

Assuming that the intervals $I_1,\dots, I_k$ are ordered from left to right we define an inner product $\skp{}{}$ on $V^k$ as follows:
\begin{multline}\label{equ:kipro}
 \skp{f}{g} = \idotsint_{I_1\times \cdots \times I_k} f(x_1,\dots, x_k)g(x_1,\dots, x_k)
\\
w_1(x_1)\cdots w_k(x_k)
\prod_{1\leq i<i'\leq k}(x_{i'}-x_i)
dx_1\cdots dx_k.
\end{multline}
Note in particular that the Vandermonde factor is always positive by the assumption on disjointness and orderedness on the intervals.
Finally, we define $k$ inner products $\skp{}{}_{(1)},\dots,\skp{}{}_{(k)}$ on $V^{k-1}$ such that
\begin{multline}\label{equ:k1ipro}
  \skp{f}{g}_{(j)} = \idotsint_{I_1\times \cdots \hat I_j \cdots \times I_k} f(x_1,\dots, \hat x_j, \dots, x_k)g(x_1,\dots,\hat x_j, \dots, x_k)
\\
w_1(x_1)\cdots \hat w_j(x_j) \cdots w_k(x_k)
\prod_{\substack{1\leq i<i'\leq k\\ i,i'\neq j }}(x_{i'}-x_i)
dx_1\cdots \widehat{dx_j} \cdots dx_k.
\end{multline}
where a hat denotes omission. 

\begin{defi}\label{def:jointorthog}
 A jointly orthogonal system of degree $n$ with respect to inner products $\skp{}{}_1,\dots, \skp{}{}_k$ ($k\geq 2$) as above is a family of polynomials $\{E_\alpha\}_\alpha$ of degree $n$ in one variable such that the family $\{E_\alpha^{(k-1)}\}_\alpha$ is a basis of $V_n^{k-1}$ orthogonal with respect to each of the inner products $\skp{}{}_{(1)},\dots,\skp{}{}_{(k)}$.
\end{defi}

Note that for $k=2$ a jointly orthogonal system of degree $n$ is just a basis of $V_n$ that is simultaneously orthogonal with respect to the two inner products $\skp{}{}_1,\skp{}{}_2$. 

Our main result is the following.

\begin{thm}\label{thm:main}
 Let $\skp{}{}_1,\dots ,\skp{}{}_k$ be inner products on $\R[x]$ of the form \eqref{equ:innerprodj} for disjoint intervals $I_1,\dots, I_k$. Then for every $n=0,1,\dots$ a jointly orthogonal system of degree $n$ with respect to these inner products exists, and it is unique up to rescaling and permutation of its members.
\end{thm}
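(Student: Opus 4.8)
The plan is to reduce the $k$-fold orthogonality of the rank one tensors to a single determinantal condition, and then to solve the resulting simultaneous orthogonalization problem by spectral methods, tracing all of the required positivity (hence existence, spectral simplicity, and the degree count) back to the positivity of the Vandermonde factor on the ordered disjoint intervals.

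First I would record the basic determinantal identity. For one-variable polynomials $p,q$ let $A=A(p,q)$ be the $k\times k$ matrix with entries $A_{il}=\skp{pq}{x^{l-1}}_i=\int_{I_i}p(x)q(x)x^{l-1}w_i(x)\,dx$. Expanding the Vandermonde factors in \eqref{equ:kipro} and \eqref{equ:k1ipro} as $\det(x_i^{l-1})$ and factoring the integral over the product domain yields $\skp{p^{(k)}}{q^{(k)}}=\det A$ and $\skp{p^{(k-1)}}{q^{(k-1)}}_{(j)}=(-1)^{k+j}C_{jk}$, the $(j,k)$-cofactor of $A$. Hence a pair $E_\alpha,E_\beta$ is jointly orthogonal (all $k$ forms vanish on it) exactly when every maximal minor of the first $k-1$ columns of $A(E_\alpha,E_\beta)$ vanishes, i.e. when those columns are linearly dependent; equivalently, when there is a nonzero polynomial $r$ of degree $\le k-2$ with $\int_{I_i}E_\alpha E_\beta\,r\,w_i\,dx=0$ for every $i$. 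In particular joint orthogonality forces $\det A=0$, so it \emph{refines} orthogonality of the full tensors $E_\alpha^{(k)},E_\beta^{(k)}$ in $V^k$.

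For existence I would work on $W:=V_n^{k-1}=S^{k-1}V_n$, of dimension $\binom{n+k-1}{k-1}$. Each $\skp{}{}_{(j)}$ is a positive definite inner product on $W$, since the reduced Vandermonde is still positive on the remaining ordered intervals, so one may form the self-adjoint operators $T_j$ on $(W,\skp{}{}_{(k)})$ determined by $\skp{T_j f}{g}_{(k)}=\skp{f}{g}_{(j)}$. A jointly orthogonal system is precisely a common orthogonal eigenbasis of $T_1,\dots,T_{k-1}$ consisting of rank one tensors; in particular its existence will force the $T_j$ to commute. The substantive point, and the \emph{main obstacle}, is to produce rank one common eigenvectors: for generic commuting forms the eigenvectors do not lie on the Veronese cone of rank one tensors, and it is only the shared Vandermonde structure, encoded in the identity above, that makes this possible. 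I would establish it by continuity from a degenerate configuration, letting the intervals cluster near ordered points $a_1<\dots<a_k$, where each $\skp{}{}_{(j)}$ degenerates to a rank one evaluation form and the leading-order problem is governed by an explicit, combinatorially described rank one basis built from the factors $(x-a_i)$, whose cardinality matches $\dim W$; one then continues the solution by the implicit function theorem, controlling nondegeneracy through the determinantal criterion. An alternative route is to identify the $E_\alpha$ as the Heine--Stieltjes polynomials of a Lam\'e-type eigenvalue problem whose gaps are the intervals $I_j$, for which the number of degree-$n$ solutions is again $\binom{n+k-1}{k-1}$.

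For uniqueness I would show that the joint spectrum is simple, so that the orthogonal eigenbasis, and hence the jointly orthogonal system, is determined up to rescaling and permutation of its members. Simplicity should follow from total positivity: the Gram and moment matrices assembled from the weights $w_i$ on the ordered disjoint intervals against the positive Vandermonde are totally positive by Cauchy--Binet (the basic composition formula), which forces distinct eigenvalues and one-dimensional joint eigenspaces. The same positivity input shows that no eigenvector can drop into $V_{n-1}^{k-1}$, i.e. that each $E_\alpha$ has nonzero leading coefficient and thus degree exactly $n$. I expect the hardest step to be the rank one compatibility in the existence argument; once the orthogonal basis is known to consist of rank one tensors, the determinantal identity together with the total positivity estimates handle simplicity, the degree count, and uniqueness in a uniform way.
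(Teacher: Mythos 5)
Your setup is sound and in fact closely parallels the paper's: the determinantal identity $\skp{p^{(k)}}{q^{(k)}}=\det\bigl(\skp{p}{x^{l-1}q}_i\bigr)$ and the observation that joint orthogonality of a pair is equivalent to rank deficiency of the $(k-1)\times k$ matrix $\bigl(\skp{E_\alpha}{x^{l-1}E_\beta}_i\bigr)$ (equivalently, to the existence of a nonzero $r$ of degree $\le k-2$ annihilated by all $k$ forms) is exactly the reformulation the paper uses, phrased there as a symmetric \emph{rectangular multiparameter eigenvalue problem} $\sum_j\lambda_j\ket{E_\alpha}_j=0$. But the two steps that carry the actual weight of the theorem are not established in your proposal. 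For existence, your operator-theoretic framing is circular: the $T_j$ commute only \emph{because} a common orthogonal eigenbasis exists, which is what is to be proven. Your degeneration argument (shrinking the intervals to points) collapses each $\skp{}{}_i$ to a rank one evaluation form, so the determinantal inner products on $V_n^{k-1}$ become degenerate and the ``leading-order problem'' is singular; you neither analyse this limit nor justify that the implicit function theorem continuation survives along the whole path (which requires global simplicity of the spectrum --- essentially the statement being proved). The fallback identification with Heine--Stieltjes polynomials is unavailable for general weights $w_j$, since those arise only for the special weights $\prod_i|x-e_i|^{m_i-1}$. The paper instead gets the count $\binom{n+k-1}{k-1}$ from a Bezout-type argument (Shapiro's Lemma) on the degree-$\binom{m+k-2}{m-1}$ determinantal variety of rank-deficient $(m+k-2)\times m$ matrices, with Definiteness Condition 2 verifying the transversality-at-infinity hypothesis, and then excludes multiple eigenvalues by showing (i) kernels of $\sum_j\lambda_jA_j$ are one-dimensional by Definiteness Condition 1, and (ii) a multiple eigenvalue would produce a two-dimensional kernel, via a perturbation-and-compactness argument on the projectivized eigenspaces.

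For uniqueness, your appeal to ``total positivity by Cauchy--Binet'' is an assertion, not an argument: it is not explained what matrix is totally positive nor how that would force the multiparameter eigenvalues $\lambda_\alpha$ to be pairwise non-collinear with one-dimensional eigenspaces. The paper derives simplicity from the same kernel-dimension argument as above, and (independently) proves a purely algebraic uniqueness statement (Proposition \ref{prop:rank1unique}): any two rank one bases of $(V_{n-1}^k)^\perp$ coincide up to rescaling and permutation, proved by applying evaluation and differentiation operators at the roots of $p$ to the expansion $p^{(k)}=\sum_\alpha\lambda_\alpha E_\alpha^{(k)}$. Either of these would be a cleaner route to your uniqueness step than the unproved total positivity claim. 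In summary: the reduction is right and matches the paper, but the existence count, the exclusion of multiplicities, and the simplicity of the spectrum --- the actual content of the theorem --- remain unproved in your proposal.
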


The following result relates the notion of joint orthogonality to the Gram-Schmidt property.

\begin{prop}\label{prop:firstsecondid}
 A family of polynomials $\{E_\alpha\}_\alpha$ is a jointly orthogonal system of degree $n$ with respect to inner products $\skp{}{}_1,\dots, \skp{}{}_k$ as above if and only if the family of polynomials $\{E_\alpha^{(k)}\}_\alpha$ forms an orthogonal basis of $(V_{n-1}^k)^\perp\subset V_n^k$, where the orthogonal complement is taken with respect to the inner product $\skp{}{}$ as in \eqref{equ:kipro}. 
\end{prop}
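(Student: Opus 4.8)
The whole argument rests on a determinant (Andréief--Heine) formula for the inner products on rank one tensors. Writing the Vandermonde factor as $\prod_{i<i'}(x_{i'}-x_i)=\det(x_i^{l-1})_{i,l=1}^{k}$ and factorizing the integral over $I_1\times\cdots\times I_k$, one obtains for single variable $p,q$
\[
\skp{p^{(k)}}{q^{(k)}}=\det\big(\skp{x^{l-1}p}{q}_i\big)_{i,l=1}^{k},\qquad \skp{f}{g}_i:=\int_{I_i}f g\,w_i\,dx,
\]
and the same computation applied to \eqref{equ:k1ipro} identifies $\skp{p^{(k-1)}}{q^{(k-1)}}_{(j)}$ with the minor of this $k\times k$ matrix obtained by deleting row $j$ and the last column. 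I also use that $V_m^k$ is spanned by the rank one tensors $q^{(k)}$, $q\in V_m$, so that $E_\alpha^{(k)}\perp V_{n-1}^k$ is equivalent to $\skp{E_\alpha^{(k)}}{q^{(k)}}=0$ for all $q\in V_{n-1}$. Thus both sides of the proposition become statements about the above determinants.

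The Laplace expansion along the last column then reads, for every $q$,
\[
\skp{E_\alpha^{(k)}}{q^{(k)}}=\sum_{j=1}^{k}(-1)^{j+k}\skp{x^{k-1}E_\alpha}{q}_j\;\skp{E_\alpha^{(k-1)}}{q^{(k-1)}}_{(j)} .
\]
Specializing $q=E_\beta$ shows at once that joint orthogonality (all $\skp{E_\alpha^{(k-1)}}{E_\beta^{(k-1)}}_{(j)}$ vanish for $\alpha\ne\beta$) forces $\skp{E_\alpha^{(k)}}{E_\beta^{(k)}}=0$, i.e. the $E_\alpha^{(k)}$ are mutually orthogonal. In both settings each $E_\alpha$ has degree exactly $n$ (stipulated in the definition; forced in the second, since a nonzero rank one tensor in $(V_{n-1}^k)^\perp\subset V_n^k$ cannot come from a polynomial of degree $<n$ or $>n$), and Pascal's identity gives the matching count
\[
\#\{E_\alpha\}=\dim V_n^{k-1}=\binom{n+k-1}{k-1}=\dim V_n^k-\dim V_{n-1}^k=\dim(V_{n-1}^k)^\perp .
\]
Since $\skp{}{}$ is positive definite, mutually orthogonal nonzero tensors are independent, so once the $E_\alpha^{(k)}$ are known to lie in $(V_{n-1}^k)^\perp$ they form a basis of it by this count; by the same count the converse is reduced to the off-diagonal vanishing of $\skp{E_\alpha^{(k-1)}}{E_\beta^{(k-1)}}_{(j)}$ together with the independence of $\{E_\alpha^{(k-1)}\}$.

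The crux, which I expect to be hardest, is the orthogonal complement condition $E_\alpha^{(k)}\perp V_{n-1}^k$. For the forward direction I feed $q\in V_{n-1}$ into the Laplace expansion and expand $q^{(k-1)}=\sum_\beta\lambda_\beta E_\beta^{(k-1)}$ in the basis $\{E_\beta^{(k-1)}\}$ of $V_n^{k-1}$; since this basis is simultaneously orthogonal for \emph{all} the $\skp{}{}_{(j)}$, the $\lambda_\beta$ do not depend on $j$ and $\skp{E_\alpha^{(k-1)}}{q^{(k-1)}}_{(j)}=\lambda_\alpha\,\skp{E_\alpha^{(k-1)}}{E_\alpha^{(k-1)}}_{(j)}$. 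Substituting and applying Cramer's rule collapses the sum to
\[
\skp{E_\alpha^{(k)}}{q^{(k)}}=\lambda_\alpha\,\det\big(c_1\;\cdots\;c_{k-1}\;\;v(q)\big),\quad c_l=\big(\skp{x^{l-1}E_\alpha}{E_\alpha}_i\big)_i,\ v(q)=\big(\skp{x^{k-1}E_\alpha}{q}_i\big)_i ,
\]
where $c_1,\dots,c_{k-1}$ are independent (the minor deleting row $j$ is the positive number $\skp{E_\alpha^{(k-1)}}{E_\alpha^{(k-1)}}_{(j)}$) and span a hyperplane $H\subset\R^k$. Everything comes down to $v(q)\in H$ for all $q\in V_{n-1}$, i.e. to finding $s\in V_{k-2}$ with $\skp{x^{k-1}q-sE_\alpha}{E_\alpha}_i=0$ for all $i$.

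To supply such $s$ I use the pairwise relations: $\skp{E_\alpha^{(k-1)}}{E_\beta^{(k-1)}}_{(j)}=0$ for all $j$ says exactly that the first $k-1$ columns of the matrix for $\skp{E_\alpha^{(k)}}{E_\beta^{(k)}}$ are dependent, yielding for each $\beta\ne\alpha$ a $\gamma_\beta\in V_{k-2}$ with $\skp{E_\alpha\gamma_\beta}{E_\beta}_i=0$ for every $i$, so that $\gamma_\beta E_\beta\perp_i E_\alpha$. The required existence of $s$ then follows from the spanning statement
\[
x^{k-1}V_{n-1}\subseteq E_\alpha V_{k-2}+\operatorname{span}\{\gamma_\beta E_\beta:\beta\ne\alpha\}\ (=V_{n+k-2}),
\]
which is where the basis property of $\{E_\beta^{(k-1)}\}$ must be converted into single variable spanning and which I regard as the essential technical obstacle; for $k=2$ it is immediate, since there joint orthogonality is literally $\skp{E_\alpha}{E_\beta}_i=0$ and the $E_\beta$ already form a basis of $V_n$. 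The backward implication is then obtained by running these identities in reverse: the Laplace formula for all $q\in V_{n-1}$, combined with the mutual orthogonality of the $E_\alpha^{(k)}$ and the dimension count, is used to recover every $\skp{E_\alpha^{(k-1)}}{E_\beta^{(k-1)}}_{(j)}=0$ and the independence of $\{E_\alpha^{(k-1)}\}$, with the same spanning input as the decisive ingredient.
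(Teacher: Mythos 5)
Your setup matches the paper's machinery: the determinant formulas for $\skp{p^{(k)}}{q^{(k)}}$ and $\skp{p^{(k-1)}}{q^{(k-1)}}_{(j)}$ are exactly \eqref{equ:bilink} and \eqref{equ:bilink1}, the $j$-independence of the expansion coefficients of $q^{(k-1)}$ in the basis $\{E_\beta^{(k-1)}\}$ is the paper's key identity \eqref{equ:coeffequal}, the pairwise orthogonality of the $E_\alpha^{(k)}$ via Laplace expansion is Lemma \ref{lem:northog}, and the dimension count is Remark \ref{rem:basis}. But the crux of the forward direction has a genuine gap. You expand along the $x^{k-1}$ row and reduce everything to the spanning statement $x^{k-1}V_{n-1}\subseteq E_\alpha V_{k-2}+\operatorname{span}\{\gamma_\beta E_\beta\}$, which you do not prove; worse, the parenthetical claim that the right-hand side equals $V_{n+k-2}$ cannot be correct, because it would put $x^{k-1}E_\alpha$ in that span and hence force $v(E_\alpha)=\bigl(\skp{x^{k-1}E_\alpha}{E_\alpha}_i\bigr)_i$ into the hyperplane $H$, i.e.\ $\skp{E_\alpha^{(k)}}{E_\alpha^{(k)}}=0$, contradicting Definiteness Condition 1. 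The obstruction is your choice of row: after substituting $\skp{E_\alpha^{(k-1)}}{q^{(k-1)}}_{(j)}=\lambda_\alpha\skp{E_\alpha^{(k-1)}}{E_\alpha^{(k-1)}}_{(j)}$ the last column still pairs $x^{k-1}$ with $q$, and no two-equal-rows structure appears. The paper (Lemma \ref{lem:firstimpliessecond}) instead expands along the \emph{first} row, so the cofactors are $\skp{E_\alpha^{(k-1)}}{(xq)^{(k-1)}}_{(i)}$ with $\deg(xq)\leq n$; applying \eqref{equ:coeffequal} once to $p=xq$ and once to $p=q$ converts the sum into the Laplace expansion of a determinant with two equal rows, hence zero (with a separate trivial case when $\skp{E_\alpha^{(k-1)}}{q^{(k-1)}}_{(1)}=0$). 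No spanning lemma is needed.

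The backward implication is also not proved: ``running these identities in reverse'' is not an argument, and from $\det M_{\alpha\beta}=0$ alone one cannot recover the vanishing of all $k$ cofactors $\skp{E_\alpha^{(k-1)}}{E_\beta^{(k-1)}}_{(j)}$. The paper takes an entirely different route here: Proposition \ref{prop:rank1unique} shows that \emph{any} family whose rank one tensors form an orthogonal basis of $(V_{n-1}^k)^\perp$ is unique up to rescaling and permutation (proved by applying $\ev_{x_k=a_i}\partial^j/\partial x_k^j$ at the roots of $p$ and using linear independence of the $E_\alpha^{(k-1)}$), and then existence of one such family --- namely a jointly orthogonal system, supplied by Theorem \ref{thm:main} together with the forward implication --- forces the given family to coincide with it. You would need to either reproduce this uniqueness-plus-existence argument or find a direct replacement; as written, both halves of your proof rest on unestablished claims.
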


%Note that for $k=1$ the first condition just says that the jointly orthogonal system has only one member. The second condition becomes the defining property of the member $E_n$ of the Gram-Schmidt basis. 
%\begin{lemma}\label{lem:firstimpliessecond0}
%For $k=2$ the first condition of Definition \ref{def:jointorthog} implies the second. 
%\end{lemma}
%Hence 
%The proof of the forward statement will be given in section \ref{sec:properties}the reverse direction, see \ref{}.

In the literature, jointly orthogonal systems arise as solutions to many well studied polynomial differential equations. %For example for $k=1$ one obtains the classical Jacobi, Laguerre and Hermite polynomials, corresponding to the cases where the interval $I_1$ is bounded, bounded in one direction or unbounded in both directions.
For example, for $k=2$ it is well known that the Lam\' e polynomials form jointly orthogonal systems. In this case the two intervals $I_1,I_2$ are bounded. The Ince polynomials form jointly orthogonal systems, with one of the intervals being unbounded. The polynomials arising in the doubly unbounded case have no special name to our knowledge but were studied by Turbiner, see also section \ref{sec:sexticpotential}.
For $k>2$ the known examples are provided by the Heine-Stieltjes polynomials or certain higher Heine-Stieltjes polynomials \cite{shapiro}.

\begin{rem}
Note that the uniqueness part of Theorem \ref{thm:main} allows to define each of these classical families of polynomials purely in terms of their orthogonality properties, without reference to any differential equation. %Of course, for $k=1$ this is obvious. 
\end{rem}

\subsection*{Structure of the paper}
In sections \ref{sec:examples} and \ref{sec:examples2} we will discuss several classical examples of jointly orthogonal systems given by solutions of polynomial differential equations. 
%The joint orthogonality properties in these cases are well known.

In section \ref{sec:jointorthog} we discuss some immediate properties of jointly orthogonal systems. In particular, we will prove the forward implication of Proposition \ref{prop:firstsecondid}.

In section \ref{sec:multieig} we show that the problem of finding a jointly orthogonal system can be restated as a multiparameter eigenvalue problem. Its solutions exist and are unique, hence Theorem \ref{thm:main} follows.

Finally section \ref{sec:gramschmidt} discusses our version of the Gram-Schmidt algorithm, which we call the rank 1 Gram-Schmidt algorithm. A uniqueness result therein will show the reverse implication of Proposition \ref{prop:firstsecondid}.

For most of the proofs it will be convenient to work in a slightly more general setting than discussed in the introduction.
Essentially, we allow general inner products, replacing the condition of disjointness of the intervals by a more abstract definiteness condition. The relevant definitions can be found in section \ref{sec:jointorthogdef}.

\subsection*{Acknowledgements}
We are grateful to Alexander Veselov for useful comments and instructive discussions on the subject of this paper.

\section{Examples: Jointly orthogonal polynomials for two inner products}\label{sec:examples}
Let us consider the case of two inner products, i.e., $k=2$. A jointly orthogonal system of degree $n$ is just a basis of $V_n$ simultaneously othogonal with respect to both inner products. 

\subsection{Heun and Lam\'e polynomials}
Let $Q(x)=(x-e_1)(x-e_2)(x-e_3)$, be a monic polynomial of degree $3$
with distinct real roots $e_1<e_2<e_3$ and $P(x)$ a polynomial of
degree $2$ such that $a_i:=P(e_i)/Q'(e_i)>0$. Set
\[
w_1(x)=w_2(x)=w(x)=\prod_{i=1}^3|x-e_i|^{a_i-1},\quad I_1=(e_1,e_2),\quad I_2=(e_2,e_3).
\]
The Heun differential operator, cf.~\cite{Erdelyetal},
\begin{align*}
L_n&=Q(x)\frac{d^2}{dx^2}+P(x)\frac{d}{dx}-n(n-1+a_1+a_2+a_3)x
\\
&=Q(x)\left(\frac{d^2}{dx^2}+\sum_{i=1}^3\frac{a_i}{x-e_i}\frac{d}{dx}\right)-n(n-1+a_1+a_2+a_3)x
\end{align*}
preserves $V_n$ and 
is self-adjoint with respect to both inner products. The latter
property follows by integration by parts by using the relation
\begin{equation}\label{e-logder}
 \frac{w'(x)}{w(x)}=\frac{P(x)-Q'(x)}{Q(x)}.
\end{equation}
Indeed this identity implies that
\[
\int_a^bL_nf(x)g(x)\,dx=\int_a^bf(x)L_ng(x)dx+(f'g-fg')Qw|_{a}^{b}
\]
and since 
\begin{equation}\label{e-boundary}
\lim_{x\to x_0} Q(x)w(x)=0, \text{  for all endpoints $x_0$ of the intervals $I_1,I_2$,}
\end{equation}
$L_n$ is selfadjoint for both inner products. Thus the jointly
orthogonal polynomials $E_{\alpha,n}$, $\alpha=1,\dots,n+1$, 
of degree $n$ are the polynomial eigenvectors of
the Heun differential operators.

Lam\'e polynomials arise in the special cases where $a_i\in\{1/2,3/2\}$.
By definition they are solutions of the Lam\'e differential equation
\[
Q(x)y''(x)+\frac12 Q'(x)y-\frac14(\nu(\nu+1)x+\lambda)y=0,
\]
with $Q$ as above, of the form 
\begin{equation}\label{e-Lamepol}
y(x)=\prod_{i=1}^3(x-e_i)^{\epsilon_i/2}p(x)
\end{equation}
with $p(x)$ a polynomial and $\epsilon_i\in\{0,1\}$. 
If $\epsilon_1=\epsilon_2=\epsilon_3=0$ the Lam\'e equation is the eigenvalue
problem for the Heun operator $L_n$
with parameters $n=\nu/2,a_i=\frac12$, $i=1,2,3$. The other cases can be
related to this case: let $\epsilon\in\{0,1\}^3$ and $L_n^{(\epsilon)}$ denote 
the Heun operator with parameters
$n$, $a_i=\frac12+\epsilon_i$. Then we have the
identity
\[
\prod_{i=1}^3(x-e_i)^{-\epsilon_i/2}\circ L_{n}^{(000)}\circ\prod_{i=1}^3(x-e_i)^{\epsilon_i/2}=L_{n-(\epsilon_1+\epsilon_2+\epsilon_3)/2}^{(\epsilon)}+c,
\]
for some multiple of the identity $c$.

Solutions of the Lam\'e equation of the form \eqref{e-Lamepol}
with $1+\epsilon_1+\epsilon_2+\epsilon_3=k$ are called Lam\'e
polynomials of degree $\nu$ of the $k$-th species. The parameter
$\nu$ dictates the behaviour at infinity of solutions and so the
degree $n$ of $p$:
\[
\nu = 2n+\epsilon_1+\epsilon_2+\epsilon_3.
\] 
Let
$E_{\alpha,n}^{(\epsilon)}, \alpha=1,\dots,n+1$ be the jointly orthogonal
 polynomials of degree $n$ with parameters $a_i=\epsilon_i+1/2$.
Then for even $\nu=2m\geq0$, there are Lam\'e polynomials of the
first species 
$
E_{\alpha,m}^{(000)}, \alpha=1,\dots,m+1,
$
and of the third species
\[
(x-e_1)^{\frac12}(x-e_2)^{\frac12}
E_{\alpha,m-1}^{(110)},\;
(x-e_1)^{\frac12}(x-e_3)^{\frac12}
E_{\alpha,m-1}^{(101)},\;
(x-e_2)^{\frac12}(x-e_3)^{\frac12}
E_{\alpha,m-1}^{(011)},
\]
$\alpha=1,\dots,m$.
For odd $\nu=2m+1\geq0$, there are Lam\'e polynomials of the fourth species
\[
(x-e_1)^{\frac12}(x-e_2)^{\frac12}(x-e_3)^{\frac12}E_{\alpha,m-1}^{(111)},
\] 
and of the second species
\[
(x-e_1)^{\frac12}E_{\alpha,m}^{(100)},
\;(x-e_1)^{\frac12}E_{\alpha,m}^{(010)},
\;(x-e_1)^{\frac12}E_{\alpha,m}^{(001)}.
\]
In both cases there is a total of $2\nu+1$ Lam\'e polynomials of degree $\nu$.

The Heun differential equation $L_ny=\lambda y$ is a Fuchsian differential equation
with four singular points $(e_1,e_2,e_3,\infty)$
on the Riemann spheres. The next examples are confluent
versions where singular points merge at infinity.
\subsection{Whittaker--Hill equation and Ince polynomials}
Let $\alpha<0, a>0$, 
\[
w(x)=e^{2\alpha x}|1-x|^{a_1-1}|1+x|^{a_2-1}, 
\quad I_1=(-1,1),\quad I_2=(1,\infty)
\]
For $a_i=\frac12,\frac32$, 
the jointly orthogonal polynomials are related
to solutions  of the {\em Whittaker--Hill equation}, see \cite{Whittaker, Ince, Arscott,NIST, HemeryVeselov},
\[
-\psi''(\theta)-(A\cos(2\theta)+B\cos(4\theta))\psi(\theta)=\lambda\psi(\theta)
\]
in trigonometric polynomials.
Let us explain the relation.
The differential operator
\[
L_n=(x^2-1)\frac{d^2}{dx^2}+(a_1(x+1)+a_2(x-1)+2\alpha(x^2-1))\frac d{dx}-2n\alpha x
\]
clearly preserves the space of polynomials of degree $\leq n$. It is self-adjoint
for both inner products as its coefficients $Q(x)=x^2-1$ and $P(x)=a_1(x+1)+a_2(x-1)+2\alpha(x^2-1)$
obey the relations \eqref{e-logder}, \eqref{e-boundary}.
The change of variables $x=\cos(2\theta)$ transforms $L_n$ to the differential operator
\[
\tilde L_n=-\frac14\frac {d^2}{d\theta^2}+
\left(-\alpha\sin(2\theta)+\frac{a_1+a_2-1}2\cot(2\theta)+\frac{a_1-a_2}{2\sin(2\theta)}\right)
\frac d{d\theta}-2n\alpha\cos(2\theta).
\]
Let $a_1,a_2\in\{\frac12,\frac32\}$ and set $\epsilon_i=a_i-\frac12\in\{0,1\}$. Let
\[
\varphi(\theta)=\sin(\theta)^{\epsilon_1}\cos(\theta)^{\epsilon_2}e^{\alpha\cos(2\theta)}.
\]
Then
$H_\nu=\varphi\circ 4\tilde L_n\circ \varphi^{-1}=H_\nu+c$ is the
Whittaker--Hill differential operator
\[
H_\nu=-\frac {d^2}{d\theta^2}-4\alpha\nu\cos(2\theta)-2\alpha^2\cos(4\theta),
\]
with parameter $\nu=2n+1+\epsilon_1+\epsilon_2$, up to an additive
constant
$c=2\alpha^2+4(\epsilon_2-\epsilon_1)\alpha-(\epsilon_1+\epsilon_2)^2$.
It follows that the jointly orthogonal polynomials
$E^{(\epsilon_1\epsilon_2)}_{1,n},\dots,E^{(\epsilon_1\epsilon_2)}_{n+1,n}$
for $a_i=\epsilon_i+\frac12\in\{\frac12,\frac32\}$ yield
eigenfunctions of $H_\nu$ of the form
\begin{equation}\label{e-Ince}
\psi_\alpha^{(\epsilon_1\epsilon_2)}(\theta)=
\sin(\theta)^{\epsilon_1}
\cos(\theta)^{\epsilon_2}
E_{\alpha,n}^{(\epsilon_1\epsilon_2)}
(\cos(2\theta))e^{\alpha\cos(2\theta)},
\quad \nu=2n+1+\epsilon_1+\epsilon_2.
\end{equation}
Altogether we get for each positive integer $\nu$ 
a basis of the space of eigenfunctions of $H_\nu$
of the form 
\[
p(\theta)e^{\alpha\cos(2\theta)},
\qquad 
p\in \mathit{TP}_{\nu-1}=\mathbb R[\cos(\theta),\sin(\theta)]_{\leq \nu-1},
\]
the space of trigonometric polynomials of
degree $\leq \nu-1$. If $\nu$ is even, the trigonometric
polynomials are
\[
\sin(\theta)E^{(10)}_{\alpha,\nu/2-1}(\cos(2\theta)),\; 
\cos(\theta)E^{(01)}_{\alpha,\nu/2-1}(\cos(2\theta)),\quad \alpha=1,\dots,\nu/2,
\]
and they form a basis of the subspace of $\mathit{TP}_{\nu-1}$
of $\pi$-antiperiodic polynomials, i.e., 
obeying $p(\theta+\pi)=-p(\theta)$.
If $\nu$ is odd they are
\begin{align*}
E^{(00)}_{\alpha,(\nu-1)/2}(\cos(2\theta)),\quad \alpha=1,\dots,(\nu+1)/2,
\\  
\sin(\theta)\cos(\theta)E^{(11)}_{\alpha,(\nu-3)/2}(\cos(2\theta)),\quad \alpha=1,\dots,(\nu-1)/2,
\end{align*}
and they are a basis of the subspace of 
$\pi$-periodic polynomials in $\mathit{TP}_{\nu+1}$.
The trigonometric polynomials arising this way are called
Ince
polynomials. They are a solutions of the differential equation
\[
p''+4\alpha\sin(2\theta)p'+(4(\nu-1)\alpha\cos(2\theta)+\lambda)p=0
\]
for different values of the spectral parameter $\lambda$.

\subsection{Eigenfunctions of Schr\"odinger operators with
sextic potential}\label{sec:sexticpotential}
Let $\ell>-1$,
\[
w(x)=|x|^{\ell+1/2} e^{-x^2/2}dx,\quad I_1=(-\infty,0),\quad I_2=(0,\infty).
\]
The differential operator
\[
L=-2x\frac{d^2}{dx^2}+(2x^2-2\ell-3)\frac d{dx}-2nx
\]
preserves polynomials of degree $\leq n$ and is 
self-adjoint for both inner products, since its coefficients
$Q(x)=-2x$ and $P(x)=2x^2-2\ell-3$ obey \eqref{e-logder} and \eqref{e-boundary}.
After the change of variables $x=r^2$, $L$ becomes
\[
\tilde L=-\frac12 \frac {d^2}{dr^2}+\left(r^3-\frac{\ell+1}{r}\right)\frac d{dr}-2n r^2
\]
This operator is related by conjugation by
\[
\varphi(r)=r^{\ell+1}e^{-r^4/4}
\]
to
the radial Schr\"odinger operator with sextic potential
\[
H=-\varphi\circ 2\tilde L\circ\varphi^{-1}
=-\frac{d^2}{dr^2}+r^6-\nu r^2+\frac{\ell(\ell+1)}{r^2},
\]
with $\nu=4n+5+2\ell$, introduced by Turbiner \cite{Turbiner} in his theory of quasi-exactly solvable
systems and is a special case of the family of monodromy free Schr\"odinger operators with 
sextic growth \cite{GibbonsVeselov}.
It follows that the simultaneous orthogonal polynomials $E_1,\dots,E_{n+1}$
give a basis 
\[
\psi_\alpha(r)=r^{\ell+1}e^{-r^4/4}E_\alpha(r^2)
\]
of eigenfunctions of $H$ in 
\[
W_n=\{r^{\ell+1}e^{-r^4/4}p(r^2),\, p\in V_n\}.
\]
The simultaneous orthogonality translates to the condition
that these eigenfunctions are characterized as being 
jointly orthogonal for the two inner products on $W_n$:
\[
\int_0^\infty f(r)g(r)dr,\qquad e^{-i\pi(\ell+1)}
\int_0^\infty f\left(e^{i\pi/2}r\right)
g\left(e^{i\pi/2}r\right)dr.
\]

\section{Examples for higher $k$: Heine-Stieltjes polynomials}\label{sec:examples2}

Consider the differential equation
\begin{equation}\label{equ:HS}
\prod_{i=0}^k(x-e_i) y'' + \sum_{j=0}^k m_j \prod_{\substack{i=0 \\ i\neq j}}^k(x-e_i)y' + V(x) y = 0.
\end{equation}
Here the unknowns are $y$ and the degree $k-1$ polynomial $V(x)$ (the van Vleck polynomial), which generalizes the eigenvalue $\lambda$ in the previous examples. 
The Heine-Stieltjes polynomials are the polynomial solutions $y$ of the above equation.
For each van Vleck polynomial $V$ there is at most one polynomial solution $y$. Furthermore note that for Heine-Stieltjes polynomials of degree $n$ the leading coefficient of the van Vleck polynomial must necessarily be $-n(n-1+\sum_j m_j)$
We assume in the following that the roots $e_i$ are real and ordered, $e_0<e_1<\cdots <e_k$, and that $m_0,\dots, m_k>0$. 
The differential operator
\[
L= \prod_{i=0}^k(x-e_i) \frac{d^2}{dx^2} + \sum_{j=0}^k m_j \prod_{\substack{i=0 \\ i\neq j}}^k(x-e_i)\frac{d}{dx} 
\]
is self-adjoint with respect to the $k$ inner products 
\[
\skp{f}{g}_j = \int_{e_{j-1}}^{e_j}
f(x)g(x) w(x) dx
\]
where 
\[
w(x)= \prod_{i=0}^k |x-e_i|^{m_j-1}.
\]
There is a well-known orthogonality relation satisfied by Heine-Stieltjes polynomials, found by Germanski \cite{germanski} and rediscovered by Volkmer \cite{volkmer}:
\begin{equation}
\label{equ:HSortho}
 \skp{E_\alpha^{(k)}}{E_\beta^{(k)}} = 0
\end{equation}
where $E_\alpha, E_\beta$ are two distinct (i.e., non-proportional) Heine-Stieltjes polynomials and the inner product is the inner product \eqref{equ:kipro} on $V^k$. 
To see \eqref{equ:HSortho}, note that 
\[
 \skp{E_\alpha}{(V_\alpha - V_\beta) E_\beta}_j =\skp{E_\alpha}{ L E_\beta}_j - \skp{L E_\alpha}{ E_\beta}_j =0
\]
where $V_\alpha$, $V_\beta$ are the (necessarily distinct) van Vleck polynomials corresponding to $E_\alpha, E_\beta$.
It follows that the matrix 
\[
\bpm 
\skp{E_\alpha}{E_\beta}_1 & \cdots & \skp{E_\alpha}{E_\beta}_k \\
\vdots & \ddots & \vdots \\
\skp{E_\alpha}{x^{k-1} E_\beta}_1 & \cdots & \skp{E_\alpha}{x^{k-1}E_\beta}_k 
\epm
\]
annihilates the coefficient vector of $V_\alpha - V_\beta$. Hence the determinant of the above matrix is zero. This is easily calculated to be the statement \eqref{equ:HSortho}.

Note furthermore that if $E_\alpha, E_\beta$ are both of degree $n$, then $V_\alpha - V_\beta$ is of degree $\leq k-1$ since the leading coefficients of the van Vleck polynomials are necessarily the same. Hence the matrix
\[
\bpm 
\skp{E_\alpha}{E_\beta}_1 & \cdots & \skp{E_\alpha}{E_\beta}_k \\
\vdots & \ddots & \vdots \\
\skp{E_\alpha}{x^{k-2} E_\beta}_1 & \cdots & \skp{E_\alpha}{x^{k-2}E_\beta}_k 
\epm
\]
must be rank deficient and hence all its $k$ minors vanish. The vanishing of the $j$-th minor is easily checked to be equivalent to the condition
\[
 \skp{E_\alpha^{(k-1)}}{E_\beta^{(k-1)}}_{(j)} = 0
\]
where $\skp{}{}_{(j)}$ is the scalar product \eqref{equ:k1ipro} on $V^{k-1}$.

It is furthermore well known that the Heine-Stieltjes differential equation has $N={ n+k-1 \choose k-1 }$ polynomial solutions of degree $n$, in one-to-one correspondence with the $N$ ways of distributing the $n$ roots of $y$ among the $k$ intervals $(e_{j-1},e_j)$ \cite{Stieltjes}. 
It follows that the Heine-Stieltjes polynomials of degree $n$ form a jointly orthogonal system of degree $n$ according to Definition \ref{def:jointorthog}.

%Todo: Are there real world examples of jointly orthogonal higher Heine-Stieltjes polynomials?

%Maybe todo: Discuss the confluent case $e_0=-\infty$ and/or $e_k=-\infty$.

\section{Jointly orthogonal systems}\label{sec:jointorthog}
\subsection{Definitions}\label{sec:jointorthogdef}
As in the introduction we denote the space of symmetric polynomials in $k$ variables by $V^k$ and let $V_n^k \subset V^k$ be the space of symmetric polynomials of degree at most $n$ in any variable. For example $x_1^3x_2^2+x_2^3x_1^2\in V_3^2$, while $x_1^4x_2+x_2^4x_1\notin V_3^2$. 

Suppose we are given $k$ inner products $\skp{}{}_1,\dots, \skp{}{}_k$ on the space of polynomials $\K[x]$. We suppose throughout this paper and without further mention the following condition:

\vskip .3cm

{\bf Standing Assumption:} The operator of multiplication by $x$ is self-adjoint for all scalar products on $\K[x]$ considered.

\vskip .3cm

The $k$ inner products on $\K[x]$ can be used to define an symmetric bilinear form $\skp{}{}$ on $V^k$ such that
\begin{equation}
 \label{equ:bilink}
 \skp{p^{(k)}}{q^{(k)}}
=
\det
\bpm
\skp{p}{q}_1 & \cdots & \skp{p}{q}_k \\
\skp{p}{xq}_1 & \cdots & \skp{p}{xq}_k \\
\vdots & \ddots & \vdots \\
\skp{p}{x^{k-1}q}_1 & \cdots & \skp{p}{x^{k-1}q}_k
\epm
\end{equation}
for all $p,q\in \K[x]$, where we used again the notation \eqref{equ:prtimes}. 
Depending on the inner products $\skp{}{}_j$, this bilinear form may or not be definite. We will generally assume the following condition.

\vskip .3cm

{\bf Definiteness Condition 1:} For all non-zero $p\in \K[x]$: $\skp{p^{(k)}}{p^{(k)}}\neq 0$.

\vskip .3cm

%\emph{Question for us: Does it follow that $\skp{}{}$ is an inner product? In our case there is a rank 1 orthogonal basis... so yes. However, in general?}

\begin{rem}
 Note that by renumbering the scalar products $\skp{}{}_j$ we may change the sign of $\skp{}{}$, so we might as well ask that $\skp{p^{(k)}}{p^{(k)}}>0$ in the above condition.
\end{rem}

Similarly, we define $k$ symmetric bilinear forms $\skp{}{}_{(1)}, \dots, \skp{}{}_{(k)}$ on $V^{k-1}$ such that
\begin{multline}\label{equ:bilink1}
 \skp{p^{(k-1)}}{q^{(k-1)}}_{(j)}
= \\
\det
\bpm
\skp{p}{q}_1 & \cdots & \skp{p}{q}_{j-1} & \skp{p}{q}_{j+1}  & \cdots & \skp{p}{q}_k \\
\vdots & \ddots & \vdots & \vdots & \ddots & \vdots \\
\skp{p}{x^{k-2}q}_1 & \cdots & \skp{p}{x^{k-2}q}_{j-1} &\skp{p}{x^{k-2}q}_{j+1} & \cdots & \skp{p}{x^{k-2}q}_k
\epm
\end{multline}
for all $p,q\in \K[x]$. These bilinear forms again may or may not be definite. However, for some results below we will assume the following condition:

\vskip .3cm

{\bf Definiteness Condition 2:} For all non-zero $p\in \K[x]$ and all $j=1,\dots,k$: $\skp{p^{(k-1)}}{p^{(k-1)}}_j\neq 0$.

\vskip .3cm

%\emph{Remark for us: We should check whether the two conditions are independent... in one direction: if one $\skp{}{}_j$ is a (suitable) linear condition of the others, Condition 1 fails, while Condition 2 still holds. }

Assuming that Definiteness Conditions 1 and 2 are satisfied, we extend Definition \ref{def:jointorthog} of jointly orthogonal systems verbatim to this more general setting.

\begin{ex}
Scalar products $\skp{}{}_j$ as in \eqref{equ:innerprodj} for disjoint intervals $I_1,\dots ,I_k$ satisfy the Definiteness Conditions 1 and 2. 
To see this, note that the bilinear form \eqref{equ:bilink} takes on the form
\begin{equation}\label{equ:kproduct}
\langle f,f \rangle 
=
 \idotsint |f(x_1, \dots , x_k)|^2 \prod_{1\leq i< j\leq k}(x_i-x_j)  \prod_{j=1}^k w_j(x_j) dx_j 
\end{equation}
where $f\in \K[x_1,\dots, x_k]$, using the Vandermonde formula.
Since the intervals are disjoint, the Vandermonde factor is non-zero and has a definite sign, while all other terms are non-negative. Hence Definiteness Condition 1 holds. By an analogous argument, Definiteness Condition 2 also holds.
\end{ex}

\subsection{Properties}\label{sec:properties}

\begin{lemma}\label{lem:firstimpliessecond}
Let $\{E_\alpha\}_\alpha$ be a jointly orthogonal system of degree $n$ with respect to the inner products $\skp{}{}_1,\dots,\skp{}{}_k$. Then the symmetric polynomials $E_\alpha^{(k)}$ are orthogonal (with respect to the bilinear form \eqref{equ:bilink}) to all symmetric polynomials $Q\in V_{n-1}^k\subset V_n^k$. 
\end{lemma}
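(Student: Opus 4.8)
The plan is to reduce the statement to rank one second arguments. Since $V_{n-1}^k=S^kV_{n-1}$ and, in characteristic zero, the $k$-th powers $q^{(k)}$ with $q\in V_{n-1}$ span $S^kV_{n-1}$, and since $\skp{E_\alpha^{(k)}}{\,\cdot\,}$ is linear, it suffices to show $\skp{E_\alpha^{(k)}}{q^{(k)}}=0$ for every $q$ with $\deg q\le n-1$. On such arguments the bilinear form is given by the determinant \eqref{equ:bilink}, so the goal becomes $\det\big[\skp{E_\alpha}{x^{i-1}q}_j\big]_{i,j=1}^k=0$.

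Next I would relate this $k\times k$ determinant to the $(k-1)$-variable forms \eqref{equ:bilink1}. Expanding along the top row (equivalently, using the Vandermonde factorization to integrate out one variable) gives
\[
\skp{E_\alpha^{(k)}}{q^{(k)}}=\sum_{j=1}^k(-1)^{1+j}\skp{E_\alpha}{q}_j\,\skp{E_\alpha^{(k-1)}}{(xq)^{(k-1)}}_{(j)},
\]
where each minor is recognized as a form \eqref{equ:bilink1} evaluated on the rank one tensor $(xq)^{(k-1)}$. Because $\deg(xq)\le n$, this tensor lies in $V_n^{k-1}$, which carries the orthogonal basis $\{E_\beta^{(k-1)}\}$; expanding $(xq)^{(k-1)}=\sum_\beta d_\beta E_\beta^{(k-1)}$ and invoking joint orthogonality, all cross terms $\skp{E_\alpha^{(k-1)}}{E_\beta^{(k-1)}}_{(j)}$ with $\beta\neq\alpha$ vanish, leaving a single surviving contribution proportional to $\skp{E_\alpha^{(k-1)}}{E_\alpha^{(k-1)}}_{(j)}$, with the coefficient $d_\alpha$ the same for every $j$.

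A clean companion fact, which I would establish first as a warm up, is mutual orthogonality $\skp{E_\alpha^{(k)}}{E_\beta^{(k)}}=0$ for $\alpha\neq\beta$: here joint orthogonality gives $\skp{E_\alpha^{(k-1)}}{E_\beta^{(k-1)}}_{(j)}=0$ for all $j$, i.e.\ every maximal minor of the top $k-1$ rows of the matrix in \eqref{equ:bilink} (taken with $q=E_\beta$) vanishes, so those rows are linearly dependent and the full determinant is $0$. The main obstacle is precisely that this row dependence mechanism fails in the situation of the Lemma: for $\deg q\le n-1$ the analogous minors $\skp{E_\alpha^{(k-1)}}{q^{(k-1)}}_{(j)}$ are generally nonzero, since they equal $c_\alpha\,\skp{E_\alpha^{(k-1)}}{E_\alpha^{(k-1)}}_{(j)}$ with $c_\alpha$ the $E_\alpha^{(k-1)}$-coordinate of $q^{(k-1)}\in V_{n-1}^{k-1}$ (a coordinate that need not vanish), and the diagonal factors are themselves nonzero by Definiteness Condition 2. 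Hence the vanishing of the determinant cannot come from the top block alone and must emerge from its coupling to the bottom row.

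I therefore expect the crux to be showing that this residual cancels, and that the cancellation genuinely requires using that the \emph{single} basis $\{E_\beta^{(k-1)}\}$ is simultaneously orthogonal for \emph{all} $k$ forms $\skp{}{}_{(j)}$ — so that $E_\alpha^{(k-1)}$ possesses one common $\skp{}{}_{(j)}$-orthogonal complement independent of $j$ — rather than merely the orthogonality attached to a single index $j$. Equivalently, the task is to upgrade the $N$-dimensional mutually orthogonal set $\{E_\beta^{(k)}\}$ to an exact description of $(V_{n-1}^k)^\perp$ inside $V_n^k$, which is exactly the point at which the low degree structure of $V_{n-1}^k$ and the full strength of joint orthogonality across all $k$ inner products must be brought to bear.
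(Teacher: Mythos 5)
Your setup follows the paper's proof almost exactly: the reduction to rank-one tensors $q^{(k)}$ with $\deg q\le n-1$, the cofactor expansion of \eqref{equ:bilink} along the first row into the minors $\skp{E_\alpha^{(k-1)}}{(xq)^{(k-1)}}_{(j)}$, and the observation that the coefficient of $E_\alpha^{(k-1)}$ in the expansion of $p^{(k-1)}$ (for $\deg p\le n$) is independent of which form $\skp{}{}_{(j)}$ is used to compute it (the paper's identity \eqref{equ:coeffequal}) are all correct and are precisely the paper's ingredients. However, you stop at the decisive step: after reducing to
\[
\skp{E_\alpha^{(k)}}{q^{(k)}} \;=\; d_\alpha\sum_{j=1}^k(-1)^{1+j}\skp{E_\alpha}{q}_j\,\skp{E_\alpha^{(k-1)}}{E_\alpha^{(k-1)}}_{(j)},
\]
you write that you ``expect the crux to be showing that this residual cancels'' without showing it. That is a genuine gap, and it is the one nontrivial point of the lemma.

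The gap closes with ingredients you already have on the table. Apply the coefficient identity once more, this time to $p=q$ itself rather than to $xq$: writing $c_\alpha$ for the $E_\alpha^{(k-1)}$-coordinate of $q^{(k-1)}$, you have $\skp{E_\alpha^{(k-1)}}{q^{(k-1)}}_{(j)}=c_\alpha\,\skp{E_\alpha^{(k-1)}}{E_\alpha^{(k-1)}}_{(j)}$ for every $j$ with the \emph{same} $c_\alpha$ --- you state this yourself in your ``obstacle'' paragraph. If $c_\alpha\neq 0$ you may therefore replace $\skp{E_\alpha^{(k-1)}}{E_\alpha^{(k-1)}}_{(j)}$ by $c_\alpha^{-1}\skp{E_\alpha^{(k-1)}}{q^{(k-1)}}_{(j)}$ in the residual sum; the result is $c_\alpha^{-1}d_\alpha$ times the cofactor expansion of a $k\times k$ determinant whose first row $\left(\skp{E_\alpha}{q}_j\right)_j$ is repeated among the rows defining those minors, hence zero. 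If $c_\alpha=0$, then all $\skp{E_\alpha^{(k-1)}}{q^{(k-1)}}_{(j)}$ vanish, and one instead expands the original determinant \eqref{equ:bilink} along its \emph{last} row, whose cofactors are exactly these vanishing minors, so the determinant is zero directly. This two-case argument is precisely the paper's proof; your suspicion that the cancellation requires some further structural input beyond what you have established is unfounded --- the identity \eqref{equ:coeffequal}, used twice (once for $xq$ and once for $q$), together with the repeated-row trick, is all that is needed.
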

\begin{proof}
Note that for any polynomial $p$ of degree at most $n$ and for all $i,j\in \{1,\dots,k\}$,
\[
 \sum_\alpha \frac{\langle E_\alpha^{(k-1)} ,p^{(k-1)} \rangle_{(i)}}{\langle  E_\alpha^{(k-1)} ,E_\alpha^{(k-1)} \rangle_{(i)}} E_\alpha^{(k-1)}
=p^{(k-1)}
=
\sum_\alpha \frac{\langle E_\alpha^{(k-1)} ,p^{(k-1)} \rangle_{(j)}}{\langle  E_\alpha^{(k-1)} ,E_\alpha^{(k-1)} \rangle_{(j)}} E_\alpha^{(k-1)}.
\]
Hence
\begin{equation}\label{equ:coeffequal}
 \frac{\langle E_\alpha^{(k-1)} ,p^{(k-1)} \rangle_{(i)}}{\langle  E_\alpha^{(k-1)} ,E_\alpha^{(k-1)} \rangle_{(i)}}
=
\frac{\langle E_\alpha^{(k-1)} ,p^{(k-1)} \rangle_{(j)}}{\langle  E_\alpha^{(k-1)} ,E_\alpha^{(k-1)} \rangle_{(j)}}
\end{equation}
for any polynomial $p$ of degree $\leq n$ and any $\alpha$.

Our goal is to show that for any $Q\in V_{n-1}^k$ and any $\alpha$
\[
\skp{E_\alpha}{Q}=0.
\]
Since the rank one tensors span $V_{n-1}^k$ it suffices to show the above equation for $Q$ of the form $q^{(k)}$, where $q$ is a polynomial of degree at most $n-1$. We will distinguish two cases: 

(i) Suppose $\langle E_\alpha^{(k-1)} ,q^{(k-1)} \rangle_{(1)}=0$. Then by \eqref{equ:coeffequal}, $\langle E_\alpha^{(k-1)} ,q^{(k-1)} \rangle_{(i)}=0$ for all $i$. Hence
\[
\langle E_\alpha^{(k)} ,q^{(k)} \rangle
=
\sum_{i=1}^k (-1)^{i+k}
\langle E_\alpha ,x^{k-1} q \rangle_{i}
\langle E_\alpha^{(k-1)} ,q^{(k-1)} \rangle_{(i)}
= 0,
\]
expanding the determinant \eqref{equ:bilink} with respect to the last row.

(ii) Suppose $\langle E_\alpha^{(k-1)} ,q^{(k-1)} \rangle_{(1)}\neq 0$. Expand the determinant \eqref{equ:bilink} with respect to the first row and compute

\begin{align*}
\langle E_\alpha^{(k)} ,q^{(k)} \rangle
&=
\sum_{i=1}^k (-1)^{i+1}
\langle E_\alpha ,q \rangle_{i}
\langle E_\alpha^{(k-1)} ,(xq)^{(k-1)} \rangle_{(i)}
\\
&=
\sum_{i=1}^k (-1)^{i+1}
\langle E_\alpha ,q \rangle_{i}
\langle E_\alpha^{(k-1)} ,(xq)^{(k-1)} \rangle_{(1)}
\frac{\langle E_\alpha^{(k-1)} ,E_\alpha^{(k-1)} \rangle_{(i)}}{\langle E_\alpha^{(k-1)} ,E_\alpha^{(k-1)} \rangle_{(1)}}
\\
&=
\frac{\langle E_\alpha^{(k-1)} ,(xq)^{(k-1)} \rangle_{(1)}}{\langle E_\alpha^{(k-1)} ,E_\alpha^{(k-1)} \rangle_{(1)}}
\sum_{i=1}^k (-1)^{i+1}
\langle E_\alpha ,q \rangle_{i}
\langle E_\alpha^{(k-1)} ,E_\alpha^{(k-1)} \rangle_{(i)}
\\
&=
\frac{\langle E_\alpha^{(k-1)} ,(xq)^{(k-1)} \rangle_{(1)}}{\langle E_\alpha^{(k-1)} ,q^{(k-1)} \rangle_{(1)}}
\sum_{i=1}^k (-1)^{i+1}
\langle E_\alpha ,q \rangle_{i}
\langle E_\alpha^{(k-1)} ,q^{(k-1)} \rangle_{(i)}
\\
&=0.
\end{align*}
Here we used \eqref{equ:coeffequal} twice, once for for $p=xq$ (this is possible since $q$ is of degree $\leq n-1$) and once for $p=q$. The last equality is true since the left hand side is a determinant of a matrix with two equal rows.
\end{proof}

An important feature of a jointly orthogonal system $E_\alpha$ is that it defines 
a family of orthogonal symmetric polynomials in $V_{n}^{k}$. 
%Hence, if for the given inner products there exists a jointly orthogonal family for each $n$, the rank-1 Gram-Schmidt algorithm succeeds in finding an orthonormal basis.
\begin{lemma}\label{lem:northog}
Let $\{E_\alpha\}_\alpha$ be a family of polynomials such that the $E_\alpha^{(k-1)}$ form a jointly orthogonal basis of $V_{n}^{k-1}$ for the inner products $\skp{}{}_{(i)}$, $i=1,\dots,k$.
Then the vectors 
\[
 E^{(k)}_\alpha \in V_{n}^{k}
\]
are pairwise orthogonal with respect to \eqref{equ:bilink}. %Furthermore, for $k=1$ these vectors lie in fact in $(V_2^{n-1})^\perp$. In particular, in the latter case the family $\{E_\alpha\}_\alpha$ is unique up to rescaling and permutation of its members by virtue of Proposition \ref{prop:rank1unique}.
\end{lemma}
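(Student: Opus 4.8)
The plan is to evaluate the bilinear form directly on the rank one tensors using its defining determinant \eqref{equ:bilink}, with no appeal to any structural theory of $V^k$. Fix two distinct members $E_\alpha, E_\beta$; the goal is to show $\skp{E_\alpha^{(k)}}{E_\beta^{(k)}}=0$. Setting $p=E_\alpha$ and $q=E_\beta$ in \eqref{equ:bilink}, this quantity is the determinant of the $k\times k$ matrix whose $(s,i)$ entry, with rows $s=1,\dots,k$ indexing the powers $x^0,\dots,x^{k-1}$ and columns $i=1,\dots,k$ indexing the inner products, equals $\skp{E_\alpha}{x^{s-1}E_\beta}_i$.

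The key step is a Laplace expansion of this determinant along its last row, exactly as in case (i) of the proof of Lemma \ref{lem:firstimpliessecond}:
\[
\skp{E_\alpha^{(k)}}{E_\beta^{(k)}}
=\sum_{i=1}^k (-1)^{i+k}\,\skp{E_\alpha}{x^{k-1}E_\beta}_i\, M_i,
\]
where $M_i$ is the minor obtained by deleting the last row and the $i$-th column. The point to verify is that each $M_i$ coincides with $\skp{E_\alpha^{(k-1)}}{E_\beta^{(k-1)}}_{(i)}$: deleting the last row leaves precisely the rows indexed by $x^0,\dots,x^{k-2}$, and deleting the $i$-th column leaves exactly the columns $1,\dots,\widehat{i},\dots,k$ in increasing order, which is the matrix appearing in the definition \eqref{equ:bilink1} of the $(i)$-form. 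This matching of row and column deletions is the only place requiring care, and it is purely a matter of bookkeeping.

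Finally, by hypothesis the family $\{E_\alpha^{(k-1)}\}_\alpha$ is a jointly orthogonal basis of $V_n^{k-1}$ for each of the forms $\skp{}{}_{(i)}$, so $\skp{E_\alpha^{(k-1)}}{E_\beta^{(k-1)}}_{(i)}=0$ for every $i$ whenever $\alpha\neq\beta$. Hence every term of the expansion vanishes and $\skp{E_\alpha^{(k)}}{E_\beta^{(k)}}=0$, as desired. I do not anticipate any genuine obstacle: once the minors are identified with the $(i)$-forms, the conclusion is immediate from joint orthogonality. In particular, in contrast to Lemma \ref{lem:firstimpliessecond} we do not even need relation \eqref{equ:coeffequal}, since here the $(i)$-orthogonality is available directly for all $i$ rather than having to be propagated from the single index $i=1$.
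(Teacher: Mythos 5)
Your proof is correct and is essentially identical to the paper's: both expand the determinant \eqref{equ:bilink} along its last row, identify the resulting minors with the forms $\skp{E_\alpha^{(k-1)}}{E_\beta^{(k-1)}}_{(i)}$ of \eqref{equ:bilink1}, and conclude from the hypothesis that all of these vanish for $\alpha\neq\beta$. Your closing observation that \eqref{equ:coeffequal} is not needed here is accurate and consistent with the paper, which likewise does not invoke it in this lemma.
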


\begin{proof}
 As in \cite{germanski, volkmer} define, for each fixed pair $\alpha\neq \beta$, the $k\times k$ matrices 
\[
 M_{\alpha\beta} 
=
(
\langle x^{i-1} E_\alpha ,E_\beta \rangle_{j}
)_{ij}.
\]
Orthogonality of the $E^{(k)}_\alpha$ is the statement that 
\[
\det M_{\alpha\beta}=\langle E_\alpha^{(k)} ,E_\beta^{(k)} \rangle=0. 
\]
Let $M_{\alpha\beta}^j$ be the minor obtained by deleting the $k$-th row and the $j$-th column.
Joint orthogonality is the statement that 
\[
\det M_{\alpha\beta}^j=\langle E_\alpha^{(k-1)} ,E_\beta^{(k-1)} \rangle_{(j)}=0 
\]
for $j=1,2,\dots, k$.
Clearly, by expanding $\det M_{\alpha\beta}$ along the $k$-th row the statement of the Lemma follows.
\end{proof}

\begin{rem}
 Note that if we are given jointly orthogonal systems for each degree $n=0,1,\dots$, the symmetric polynomials $E_\alpha^{(k)}$ form an orthogonal basis of the space of symmetric polynomials $V^k$. By Theorem \ref{thm:main} this basis is canonical, i.e., uniquely defined (up to rescaling) and independent of arbitrary choices. This is in contrast to other methods of obtaining an orthogonal basis of $V^k$, for example by applying the Gram-Schmidt algorithm to an arbitrary non-orthogonal basis.
\end{rem}

% 
% \section{Jointly orthogonal polynomials}\label{sec:jointorthog}
% 
% \subsection{Orthogonal polynomials with respect to two inner products}
% Suppose we are given two different inner products $\skp{}{}_1$, $\skp{}{}_2$ on the space of polynomials. We say a basis of the space of polynomials of degree $\leq n$ is \emph{jointly orthogonal} if it is orthogonal with respect to both inner products. Generically, the condition of joint orthogonality determines the polynomials in the basis uniquely. Note that finding a jointly orthogonal basis is equivalent to solving the relative eigenvalue problem
% \begin{equation}\label{equ:releig}
%  \lambda_1 A_1 u + \lambda_2 A_2 u =0
% \end{equation}
% where $A_{1,2}$ are the $(n+1)\times (n+1)$ matrices describing the inner products in some basis, and the eigenvectors $u$ are the jointly orthogonal basis vectors. 
% %Finding a jointly orthogonal basis of polynomials thus requires finding the roots of a polynomial and is hence more complicated then the Gram-Schmidt algorithm, with just one inner product.

\section{Joint orthogonality and multiparameter eigenvalue problems }\label{sec:multieig}

\subsection{Symmetric rectangular multiparameter eigenvalue problems}\label{sec:symmmultpar}
\hfill \\
Let $A_1,\dots, A_k$ be complex $(m+k-2)\times m$ matrices. A rectangular multiparameter eigenvalue problem is the equation
\begin{equation}
 \label{equ:smultpar}
 \sum_{i=1}^k \lambda_i A_i v = 0
\end{equation}
in complex unknowns $(\lambda_1,\dots, \lambda_k)\neq 0$ (the multiparameter eigenvalue, defined up to a multiplicative constant) and $0\neq v\in \C^n$ (the eigenvector).
Eqn. \eqref{equ:smultpar} is overdetermined and may have solutions or not.

Define the $m\times m$ matrix $A_{ij}$, $1\leq i \leq k$, $1\leq j \leq k-1$, as the submatrix of $A_i$ composed of the rows $j,\dots, j+m-1$. We call the rectangular multiparameter eigenvalue problem 
%\emph{Hermitian} if all matrices $A_{ij}$ are Hermitian and 
\emph{symmetric} if all matrices $A_{ij}$ are real and symmetric.

\begin{rem}
 For $k>2$ the matrices $A_i$ appearing in a symmetric rectangular multiparameter eigenvalue problem are necessarily Hankel matrices.
\end{rem}

 Similar to \eqref{equ:bilink1} define the $k$-vector valued sesquilinear form $\mu$ on $S^{k-1}\C^n$ such that
\[
 \mu(\underbrace{u\otimes \cdots \otimes u}_{k-1\times},\underbrace{v\otimes \cdots \otimes v}_{k-1\times})
:=
\vec{\det} (\bar u^T A_{ji} v )_{ij}.
\]
Here the vector valued determinant $\vec{\det}$ of a $(k-1)\times k$ matrix is the $k$-vector of its minors of size $k-1$, taken with alternating signs. 
If for all non-zero $v\in \C^n$: $\mu(v^{\otimes k-1},v^{\otimes k-1})\neq 0$ one calls the mutiparameter eigenvalue problem locally definite. 

In the following, we will say that two multiparameter eigenvalues are \emph{distinct} if they are not collinear. We will say that a multiparameter eigenvalue is \emph{real} if some non-zero multiple is real.

\begin{prop}\label{prop:eigval}
 Suppose $v$ is an eigenvector of a symmetric and locally definite rectangular multiparameter eigenvalue problem. Then the corresponding multiparameter eigenvalue is $\lambda=\mu(v^{\otimes k-1},v^{\otimes k-1})$ and is real. Suppose $u$ is another eigenvector corresponding to the eigenvalue $\lambda'$ which is distinct from $\lambda$. Then 
\[
 \mu(u^{\otimes k-1},v^{\otimes k-1})=0
\]
In addition all multiparameter eigenvectors may be taken real.
\end{prop}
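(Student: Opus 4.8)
The plan is to collapse the rectangular multiparameter problem onto a single $(k-1)\times k$ matrix attached to $v$, and then read off all three assertions from elementary linear algebra combined with the reality properties of the symmetric blocks $A_{ij}$. First I would unpack the eigenvalue equation using the row-window decomposition. Since the windows of rows $\{i,\dots,i+m-1\}$, $i=1,\dots,k-1$, exhaust all $m+k-2$ rows of each $A_j$, the single equation $\sum_{j=1}^k\lambda_j A_j v=0$ is equivalent to the collection $\sum_{j=1}^k \lambda_j A_{ji}v=0$ for $i=1,\dots,k-1$. Pairing each of these on the left with $\bar v^T$ gives $N\lambda=0$, where $N=(\bar v^T A_{ji} v)_{ij}$ (rows $i=1,\dots,k-1$, columns $j=1,\dots,k$) is precisely the matrix whose vector of signed maximal minors is $\mu(v^{\otimes k-1},v^{\otimes k-1})$.

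For the eigenvalue assertion I would invoke the standard fact that, for any $(k-1)\times k$ matrix $N$, the vector $\vec{\det}(N)$ of signed $(k-1)$-minors lies in $\ker N$: the $i$-th component of $N\,\vec{\det}(N)$ is, up to sign, the Laplace expansion along the top row of the $k\times k$ matrix obtained by placing the $i$-th row of $N$ on top of $N$, a matrix with a repeated row and hence zero determinant. Local definiteness gives $\vec{\det}(N)=\mu(v^{\otimes k-1},v^{\otimes k-1})\neq 0$, so some maximal minor is nonzero, $N$ has full row rank $k-1$, and $\ker N$ is one-dimensional. As $\lambda$ is a nonzero element of this kernel, $\lambda$ and $\mu(v^{\otimes k-1},v^{\otimes k-1})$ are collinear, which is the claimed equality up to the usual scaling. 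Reality is then immediate: each $A_{ji}$ is real symmetric, so $\overline{\bar v^T A_{ji} v}=\bar v^T A_{ji}^T v=\bar v^T A_{ji}v$, whence $N$ is a real matrix and $\vec{\det}(N)$ a real vector.

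For the orthogonality relation I would set $P=(\bar u^T A_{ji} v)_{ij}$, so that $\mu(u^{\otimes k-1},v^{\otimes k-1})=\vec{\det}(P)$. Pairing $v$'s equation with $\bar u^T$ gives $P\lambda=0$, and pairing $u$'s equation $\sum_j \lambda'_j A_{ji}u=0$ with $\bar v^T$ gives $\bar P\lambda'=0$, using $\overline{\bar u^T A_{ji}v}=\bar v^T A_{ji}u$; conjugating and invoking the reality of $\lambda'$ (already established) yields $P\lambda'=0$ as well. Since $\lambda$ and $\lambda'$ are distinct, $\ker P$ is at least two-dimensional, $P$ has rank at most $k-2$, all its maximal minors vanish, and $\mu(u^{\otimes k-1},v^{\otimes k-1})=\vec{\det}(P)=0$. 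Finally, for the reality of eigenvectors I would observe that, the windows covering all rows, each $A_j$ is itself real; choosing $\lambda$ real and taking real and imaginary parts of $\sum_j\lambda_j A_j v=0$ shows that $\Re v$ and $\Im v$ both solve the equation, at least one being nonzero, which provides a real eigenvector (carrying the same eigenvalue $\lambda$ by the uniqueness from the first part).

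I expect the only place requiring genuine care to be the bookkeeping of complex conjugates: the whole argument turns on the identity $\overline{\bar u^T A_{ji}v}=\bar v^T A_{ji}u$ for real symmetric $A_{ji}$, which simultaneously supplies the reality of the eigenvalue and the crucial passage from $\bar P\lambda'=0$ to $P\lambda'=0$. The remaining ingredients—that the signed maximal minors span the kernel and that local definiteness forces full row rank—are routine linear algebra.
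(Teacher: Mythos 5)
Your proof is correct and follows exactly the route the paper intends: the paper dismisses this proposition with ``an easy verification, using Cramer's rule,'' and your key lemma---that the vector of signed maximal minors of the $(k-1)\times k$ matrix $(\bar v^T A_{ji} v)_{ij}$ lies in its kernel, which local definiteness forces to be one-dimensional---is precisely that Cramer-type verification, with the conjugation bookkeeping for reality and for passing from $\bar P\lambda'=0$ to $P\lambda'=0$ handled correctly. Nothing further is needed.
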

\begin{proof}
 It is an easy verification, using Cramer's rule.
\end{proof}

For $k=2$ and $A_1$ the identity matrix, the proposition reduces to well known statements of elementary linear algebra.

\begin{rem}
We use here the notation ``rectangular multiparameter eigenvalue problem'' to distinguish it from a multiparameter eigenvalue problem in the sense of \cite{volkmer}. A rectangular multiparameter eigenvalue problem determines a multiparameter eigenvalue problem in the sense of loc. cit., and eigenvalues in our sense are eigenvalues in the sense of loc. cit. Since we only consider multiparameter eigenvalue problems that are rectangular in this paper, we will sometimes drop the adjective rectangular, abusing notation slightly.
\end{rem}

\begin{rem}
 A more general definition of symmetry of a rectangular multiparameter eigenvalue problem is provided in Appendix \ref{sec:symmmultpar2}.
\end{rem}

\subsection{The relation to joint orthogonality}
One motivation for Definition \ref{def:jointorthog} is the following Theorem. %, which should be seen as an analog of the relative eigenvalue equation \eqref{equ:releig} in the $k=2$ case.

\begin{thm}\label{thm:multieig}
Assume inner products $\skp{}{}_1,\dots \skp{}{}_k$ on $\R[x]$ are given that satisfy the Definiteness Conditions 1 and 2. Then the following two conditions are equivalent for a family $\{E_\alpha\}_{\alpha=1,\dots, N}$ of degree $\leq n$ polynomials, with $N={ n+k-1\choose k-1 }=\dimens V_n^{k-1}$. 
\begin{itemize}
 \item $\{E_\alpha\}_\alpha$ is a jointly orthogonal system of degree $n$ with respect to the inner products given.
 \item The $E_\alpha$ are solutions to the symmetric rectangular multiparameter eigenvalue problem
\begin{equation}
\label{equ:eigval}
 \sum_{j=1}^k \Ket{E_\alpha}_j \lambda_{\alpha, j} =0
\end{equation}
for \emph{distinct} non-zero eigenvectors $\lambda_\alpha=(\lambda_{\alpha,1},\dots , \lambda_{\alpha,k})$, where $\Ket{E_\alpha}_j$ is the linear form $v\mapsto \skp{v}{E_\alpha}_j$ on the space of polynomials of degree $\leq n+k-2$.
\end{itemize}

In fact, all such $\lambda_\alpha$ are multiples of 
\begin{equation}
\label{equ:eigvalexpl}
 \left( \skp{E_\alpha^{(k-1)}}{E_\alpha^{(k-1)}}_{(1)}, \dots,(-1)^{j-1} \skp{E_\alpha^{(k-1)}}{E_\alpha^{(k-1)}}_{(j)} ,\dots, (-1)^{k-1} \skp{E_\alpha^{(k-1)}}{E_\alpha^{(k-1)}}_{(k)} \right).
\end{equation}
\end{thm}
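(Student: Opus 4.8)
The plan is to prove the two implications separately, using Proposition \ref{prop:eigval} as the engine. I identify an eigenvector $v$ with the coefficient vector of a polynomial $E$ of degree $\leq n$, so that $\Ket{E}_j$ acts on $V_{n+k-2}$ (of dimension $n+k-1$) by $v\mapsto \skp{v}{E}_j$, and the form $\mu$ of Section \ref{sec:symmmultpar} specializes via \eqref{equ:bilink1} to $\mu(E^{\otimes k-1},F^{\otimes k-1})=((-1)^{j-1}\skp{E^{(k-1)}}{F^{(k-1)}}_{(j)})_j$. Definiteness Condition 2 is then exactly local definiteness.

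The reverse direction is immediate from Proposition \ref{prop:eigval}. Given solutions $E_\alpha$ with distinct eigenvalues, the proposition yields $\mu(E_\alpha^{\otimes k-1},E_\beta^{\otimes k-1})=0$ for $\alpha\neq\beta$, i.e. $\skp{E_\alpha^{(k-1)}}{E_\beta^{(k-1)}}_{(j)}=0$ for every $j$, and identifies the eigenvalue of $E_\alpha$ with \eqref{equ:eigvalexpl}. Since the diagonal terms $\skp{E_\alpha^{(k-1)}}{E_\alpha^{(k-1)}}_{(j)}$ are nonzero by Definiteness Condition 2, pairing a relation $\sum_\alpha c_\alpha E_\alpha^{(k-1)}=0$ with $E_\beta^{(k-1)}$ in $\skp{}{}_{(1)}$ forces $c_\beta=0$; thus the $N=\dim V_n^{k-1}$ tensors $E_\alpha^{(k-1)}$ form an orthogonal basis of $V_n^{k-1}$ for each $\skp{}{}_{(j)}$, which is the definition of a jointly orthogonal system.

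For the forward direction the real content is to show that each $E_\alpha$, with the candidate $\lambda_\alpha$ of \eqref{equ:eigvalexpl}, satisfies $\sum_j \lambda_{\alpha,j}\skp{v}{E_\alpha}_j=0$ for \emph{all} $v\in V_{n+k-2}$, not merely on $\mathrm{span}\{x^iE_\alpha\}$. First I reduce to a spanning set: since $\{E_\gamma^{(k-1)}\}$ spans $S^{k-1}V_n$, the $E_\gamma$ span $V_n$, so $\{x^iE_\gamma: 0\le i\le k-2\}$ spans $V_{n+k-2}$. Writing $a_i:=(\skp{E_\alpha}{x^iE_\alpha}_j)_j$ and $b_i:=(\skp{E_\alpha}{x^iE_\beta}_j)_j$ in $\K^k$, a cofactor expansion of \eqref{equ:bilink1} shows that $\sum_j\lambda_{\alpha,j}\skp{x^iE_\gamma}{E_\alpha}_j$ is, up to sign, the scalar pairing of $(\skp{E_\alpha}{x^iE_\gamma}_j)_j$ with $\vec{\det}[a_0,\dots,a_{k-2}]=\lambda_\alpha$; for $\gamma=\alpha$ this is a determinant with a repeated row, hence zero. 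The crux is $\gamma=\beta\neq\alpha$. The key input is \eqref{equ:coeffequal}, which I repackage as the vector identity $\vec{\det}[c_0(p),c_0(xp),\dots,c_0(x^{k-2}p)]=\rho_\alpha(p)\,\lambda_\alpha$ valid for all $\deg p\le n$, where $c_0(p):=(\skp{E_\alpha}{p}_j)_j$ and $\rho_\alpha(p)$ is the common ratio of \eqref{equ:coeffequal}. Substituting $p=E_\alpha+sE_\beta$ (still of degree $\leq n$) and extracting the coefficient of $s^1$, multilinearity and antisymmetry of $\vec{\det}$ give $\sum_i \vec{\det}[a_0,\dots,b_i,\dots,a_{k-2}]=\kappa\,\lambda_\alpha$ for a scalar $\kappa$. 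Taking the scalar product with any $a_l$ annihilates the right-hand side (repeated $a_l$) and every left-hand term except $i=l$ (all others repeat $a_l$), leaving $\pm\det[b_l,a_0,\dots,a_{k-2}]=0$, which is exactly $\sum_j\lambda_{\alpha,j}\skp{x^lE_\beta}{E_\alpha}_j=0$ for each $l=0,\dots,k-2$. Thus $\sum_j\lambda_{\alpha,j}\Ket{E_\alpha}_j$ kills the spanning set, so $E_\alpha$ is an eigenvector; $\lambda_\alpha\neq 0$ by Definiteness Condition 2, and Proposition \ref{prop:eigval} certifies its eigenvalue as \eqref{equ:eigvalexpl}.

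It remains to prove the $\lambda_\alpha$ are pairwise non-collinear. If $\lambda_\alpha\parallel\lambda_\beta$, the eigenvector relations for $E_\alpha$ and $E_\beta$ together show that the whole plane $\mathrm{span}(E_\alpha,E_\beta)$ consists of eigenvectors for a single eigenvalue; since the problem is symmetric and locally definite its spectrum is simple, a contradiction. I expect this simplicity — that a locally definite problem admits no two-dimensional eigenspace — to be the one point requiring either the general multiparameter-eigenvalue machinery or a short auxiliary polarization of $\mu((E_\alpha+sE_\beta)^{\otimes k-1},(E_\alpha+sE_\beta)^{\otimes k-1})$. The genuine obstacle, however, is the cross-term vanishing for $\beta\neq\alpha$, and the decisive idea there is to polarize the consequence \eqref{equ:coeffequal} of joint orthogonality and read off a single scalar product against $a_l$.
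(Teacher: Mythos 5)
Your reverse implication and your forward-direction cross-term argument are both sound. The polarization of \eqref{equ:coeffequal} --- substituting $p=E_\alpha+sE_\beta$ into the identity $\vec{\det}[c_0(p),\dots,c_0(x^{k-2}p)]=\rho_\alpha(p)\,\lambda_\alpha$, extracting the coefficient of $s$, and pairing with $a_l$ --- is a genuine alternative to the paper's route. The paper instead observes that for every $p\in V_n$ the sum $\sum_j(-1)^{j-1}\skp{p}{E_\alpha}_j\skp{p^{(k-1)}}{E_\alpha^{(k-1)}}_{(j)}$ vanishes (a $k\times k$ determinant with two equal rows), rewrites it via \eqref{equ:coeffequal} as $\rho_\alpha(p)\cdot\sum_j(-1)^{j-1}\skp{p}{E_\alpha}_j\skp{E_\alpha^{(k-1)}}{E_\alpha^{(k-1)}}_{(j)}$, and concludes that the linear factor vanishes identically because the polynomial factor equals $1$ at $p=E_\alpha$ and a product of polynomials cannot vanish identically unless one factor does; the remaining test vectors $x^rE_\alpha$, $r\le k-2$, are again determinants with repeated rows. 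Your version trades that factorization argument for multilinearity of the determinant; both rest on the same key identity \eqref{equ:coeffequal}, and both buy the same thing.

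The genuine gap is the final step, distinctness (non-collinearity) of the $\lambda_\alpha$. You assert that a symmetric, locally definite problem has simple spectrum and propose to prove it by polarizing $\mu$. That is false as stated: local definiteness is at most Definiteness Condition 2, and Condition 2 does not exclude two-dimensional kernels. For $k=2$ take $A_1=A_2$ equal to the Gram matrix of one positive definite inner product; then $\mu(v,v)=(\skp{v}{v},-\skp{v}{v})\neq0$ for all $v\neq0$, yet $\lambda=(1,-1)$ has the whole space as its eigenspace. Any argument seeing only $\mu$, i.e.\ only the truncated $(k-1)\times k$ Gram matrices, cannot detect the obstruction. What is needed --- and what the paper uses --- is Definiteness Condition 1: if $\ker\bigl(\sum_j\lambda_jA_j\bigr)$ had dimension $\geq2$ it would contain a polynomial $v$ of degree $<n$; then $x^rv$ has degree $\leq n+k-2$ for all $r\leq k-1$, so the eigenvalue relation makes the columns of the full $k\times k$ matrix $(\skp{v}{x^rv}_j)_{r,j}$ linearly dependent, whence $\skp{v^{(k)}}{v^{(k)}}=0$, contradicting Condition 1. (The drop to degree $<n$ is essential, since the eigenvector relation is only known on $V_{n+k-2}$.) Also note in passing that Condition 2 is strictly stronger than local definiteness, not ``exactly'' it: local definiteness requires only one of the $\skp{p^{(k-1)}}{p^{(k-1)}}_{(j)}$ to be nonzero. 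With the distinctness step replaced by the Condition 1 argument, your proof is complete.
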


\begin{rem}
In the usual monomial basis of the space of polynomials \eqref{equ:eigval} becomes a symmetric multiparameter eigenvalue problem of the form \eqref{equ:smultpar}. Here $m=n+1$ and $A_j$ is the matrix of the linear operator
\[
 V_n \to V_{n+k-2}^*
\]
sending $v\in V_n$ to $\ket{v}_j\in V_{n+k-2}^*$.
\end{rem}

\begin{cor}\label{cor:HSeig}
 The Heine-Stieltjes polynomials are solutions to the rectangular multiparameter eigenvalue problem above for distinct eigenvalues.
\end{cor}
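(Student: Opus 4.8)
The plan is to read the corollary off Theorem \ref{thm:multieig}: once the Heine-Stieltjes setting is seen to satisfy that theorem's hypotheses, the forward implication of its equivalence converts the joint orthogonality already established in Section \ref{sec:examples2} into exactly the desired statement about the multiparameter eigenvalue problem. So the first task is pure bookkeeping. The $k$ inner products $\skp{f}{g}_j=\int_{e_{j-1}}^{e_j}fg\,w\,dx$ are precisely of the form \eqref{equ:innerprodj} attached to the $k$ disjoint, left-to-right ordered intervals $(e_{j-1},e_j)$, $j=1,\dots,k$. Hence, by the Example following Definiteness Condition 2, they satisfy Definiteness Conditions 1 and 2, and multiplication by $x$ is self-adjoint for each of them, so the Standing Assumption holds; in particular $N=\binom{n+k-1}{k-1}=\dimens V_n^{k-1}$ matches the count required in Theorem \ref{thm:multieig}.

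Next I would invoke the conclusion of Section \ref{sec:examples2}, that the degree-$n$ Heine-Stieltjes polynomials $\{E_\alpha\}$ form a jointly orthogonal system of degree $n$. Recall how this was obtained: Stieltjes' count produces exactly $N$ such polynomials, the van Vleck argument yields $\skp{E_\alpha^{(k-1)}}{E_\beta^{(k-1)}}_{(j)}=0$ for $\alpha\neq\beta$ and every $j$, and Definiteness Condition 2 guarantees the diagonal pairings are nonzero; an orthogonal family of nonzero vectors is linearly independent, so the $E_\alpha^{(k-1)}$ are a basis of $V_n^{k-1}$ orthogonal with respect to each $\skp{}{}_{(j)}$, which is exactly Definition \ref{def:jointorthog}.

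With both inputs in hand, the corollary is immediate from the forward implication of Theorem \ref{thm:multieig}: a jointly orthogonal system of degree $n$ solves the symmetric rectangular multiparameter eigenvalue problem \eqref{equ:eigval} for distinct (non-collinear) non-zero eigenvectors, whose values are given explicitly by \eqref{equ:eigvalexpl}. I do not expect a genuine obstacle here, since every substantive step---the Definiteness Conditions, the joint orthogonality, and above all the hard equivalence of Theorem \ref{thm:multieig}---has already been carried out; the only thing to verify is that the indices line up (the number of intervals is $k$, the two occurrences of $N$ agree, and the word ``distinct'' in Theorem \ref{thm:multieig} is exactly the non-collinearity we want). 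The one point worth flagging, rather than an obstacle, is that the eigenvalue delivered by \eqref{equ:eigvalexpl} is a Gram-type quantity and need not coincide with the van Vleck polynomial; the van Vleck polynomials enter here only as the device that established joint orthogonality in Section \ref{sec:examples2}, with the distinctness of the eigenvectors itself being part of what the forward implication of Theorem \ref{thm:multieig} asserts.
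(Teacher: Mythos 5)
Your proposal is correct and follows the same route the paper intends: the corollary is stated without proof precisely because it is the forward direction of Theorem \ref{thm:multieig} applied to the jointly orthogonal system of Heine--Stieltjes polynomials established in Section \ref{sec:examples2}, with the Definiteness Conditions supplied by the disjoint-interval Example. Your explicit verification that the $N$ mutually orthogonal, non-null vectors $E_\alpha^{(k-1)}$ form a basis, and your remark distinguishing the eigenvalue \eqref{equ:eigvalexpl} from the van Vleck polynomial, merely fill in details the paper leaves implicit.
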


\begin{proof}[Proof of the Theorem]
%The statement that {equ:eigvalexpl} by Propositions \ref{prop:eigval} and 
Suppose first that the $E_\alpha$ solve the eigenvalue problem with respect to distinct eigenvalues. Then by Proposition \ref{prop:eigval} the eigenvalues are obtained by \eqref{equ:eigvalexpl} and the $E_\alpha^{(k-1)}$ form an orthogonal basis of $V_n^{k-1}$ with respect to each of the bilinear forms $\skp{}{}_{(j)}$.

For the other direction, note that for all $p\in V_n$
\begin{align*}
%\label{equ:thmp1} 
0 &=
\sum_{j=1}^k (-1)^{j-1} \skp{ p }{E_\alpha}_j  \skp{ p^{(k-1)} }{E_\alpha^{(k-1)}}_{(j)} 
\\&=
  \frac{ \skp{ p^{(k-1)} }{E_\alpha^{(k-1)}}_{(1)} }{\skp{E_\alpha^{(k-1)}}{E_\alpha^{(k-1)}}_{(1)}}
  \sum_{j=1}^k  (-1)^{j-1} \skp{ p }{E_\alpha}_j  \skp{E_\alpha^{(k-1)}}{E_\alpha^{(k-1)}}_{(j)}.
 \end{align*}
 The first equation is the vanishing of a determinant with two equal rows, and for the second equation we used \eqref{equ:coeffequal}. Note that the right hand side is a product of a polynomial in (the coefficients of) $p$ and a linear function in $p$. If the product vanishes for all $p$, then one of the factors has to vanish identically. It can not be the first, since this factor is 1 for $p=E_\alpha$. Hence
\[
 \sum_{j=1}^k  (-1)^{j-1} \skp{ p }{E_\alpha}_j  \skp{E_\alpha^{(k-1)}}{E_\alpha^{(k-1)}}_{(j)} =0
\]
for all polynomials $p$ of degree at most $n$. 
Hence the left hand side of \eqref{equ:eigval} is zero on all polynomials of degree $\leq n$. Next, evaluate the left hand side of \eqref{equ:eigval} on $x^r E_\alpha$, $r=0,\dots, k-2$.
\[
  \sum_{j=1}^k (-1)^{j-1} \skp{E_\alpha}{x^r E_\alpha}_j \skp{E_\alpha^{(k-1)}}{E_\alpha^{(k-1)}}_{(j)}.
\]
This is however the determinant of a matrix with two equal rows and hence zero.
Summarizing, we have shown that the left hand side of \eqref{equ:eigval} vanishes on all polynomials of degrees $\leq n+k-2$. Hence the $E_\alpha$ are indeed multiparameter eigenvectors. It remains to be shown that the corresponding eigenvalues are distinct.
So suppose to the contrary that two members of the family both are eigenvectors for some eigenvalue $(\lambda_1,\dots, \lambda_k)$. Since the members of the family are non-collinear by assumption, the kernel of $\sum_{j=1}^k\lambda_j A_j$ has dimension $\geq 2$. It follows that there is polynomial of degree $<n$ in the kernel, say $v$. But then we may replace
\[
\skp{v}{x^r v}_1= -\sum_{j=2}^k \frac{\lambda_k}{\lambda_1} \skp{v}{x^r v}_j
\]
in the determinant defining $\skp{v^{(k)}}{v^{(k)}}$, see \eqref{equ:bilink}. The determinant of a matrix with linearly dependent columns vanishes, and hence arrive at a contradiction to Definiteness Condition 1. Here we used that $\lambda_1\neq 0$, which follows from \eqref{equ:eigvalexpl} and Definiteness Condition 2. Alternatively, since $\lambda \neq 0$ we may as well suppose that $\lambda_1\neq 0$. 
\end{proof}

\subsection{Existence of solutions, and the proof of Theorem \ref{thm:main}}
\begin{thm}
 Assume we are given $k$ inner products on the space of polynomials such that Definiteness Conditions 1 and 2 hold.
Then for every $n$ the eigenvalue problem \eqref{equ:eigval} has ${ n+k-1 \choose k-1 }$ distinct solutions $(\lambda_1,\dots, \lambda_k)\in \PP^{k-1}$. In particular it follows that there is a (unique up to rescaling) set of ${ n+k-1 \choose k-1 }$ multiparameter eigenvectors $E_\alpha$ that form a jointly orthogonal basis in the sense of Definition \ref{def:jointorthog}.
\end{thm}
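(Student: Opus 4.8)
The plan is to prove the theorem through the multiparameter eigenvalue reformulation of Theorem \ref{thm:multieig}, so that it suffices to produce exactly $N=\binom{n+k-1}{k-1}$ pairwise distinct eigenvalues $\lambda_\alpha\in\PP^{k-1}$ of the problem \eqref{equ:eigval}; by that theorem the corresponding eigenvectors $E_\alpha$ then form a jointly orthogonal system, and conversely every jointly orthogonal system arises this way. Passing to the monomial basis as noted after Theorem \ref{thm:multieig}, I would view the problem as the linear map $\phi\colon\PP^{k-1}\to\PP(\mathrm{Mat}_{(n+k-1)\times(n+1)})$, $\lambda\mapsto A(\lambda)=\sum_j\lambda_jA_j$, and observe that the eigenvalues are exactly the points of $\phi^{-1}(D)$, where $D$ is the generic determinantal variety of $(n+k-1)\times(n+1)$ matrices of non-maximal column rank.

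First I would record the geometric consequences of the Definiteness Conditions. Definiteness Condition 1 forces every eigenvalue to have a one-dimensional eigenspace: if $\ker A(\lambda)$ had dimension $\geq 2$ it would contain a polynomial $v$ of degree $\leq n-1$, whence $x^rv\in V_{n+k-2}$ and $\sum_j\lambda_j\skp{x^r v}{v}_j=0$ for $r=0,\dots,k-1$, making the matrix $(\skp{x^r v}{v}_j)$ singular, i.e. $\skp{v^{(k)}}{v^{(k)}}=0$, a contradiction. The same computation shows two eigenvectors with a common eigenvalue must be collinear, so there are at most $N$ distinct eigenvalues: by Proposition \ref{prop:eigval} eigenvectors with distinct eigenvalues are orthogonal for each $\skp{}{}_{(j)}$, and since Definiteness Condition 2 makes $\skp{}{}_{(1)}$ anisotropic on rank one tensors, the associated $E_\alpha^{(k-1)}$ are linearly independent in the $N$-dimensional space $V_n^{k-1}$. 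In particular $\phi^{-1}(D)$ is finite, hence proper, and all eigenvalues are real by Proposition \ref{prop:eigval}.

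Next I would obtain the count. The variety $D$ of submaximal rank has codimension $(n+k-1)-(n+1)+1=k-1$ and degree $\binom{n+k-1}{k-1}=N$ by the Giambelli--Thom--Porteous formula for maximal minors. Since $\phi$ embeds $\PP^{k-1}$ as a linear subspace of the complementary dimension $k-1$ (linear independence of the $A_j$ again following from definiteness) and the intersection is proper by the previous step, conservation of intersection number shows that $\phi^{-1}(D)$ consists of $N$ points counted with multiplicity. Combined with the bound of at most $N$ distinct points, the theorem reduces to showing that each point of $\phi^{-1}(D)$ is reduced, i.e. that $\phi$ meets $D$ transversally.

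The transversality is the main obstacle. At an eigenvalue $\lambda_0$ with eigenvector $v_0$ and left kernel spanned by $u_1,\dots,u_{k-1}\in V_{n+k-2}$, smoothness of $D$ at the corank one point $A(\lambda_0)$ reduces transversality to full-rankness of the $(k-1)\times k$ matrix $C=(\skp{u_i}{v_0}_j)_{i,j}$, whose right kernel already contains $\lambda_0$; by \eqref{equ:eigvalexpl} and Definiteness Condition 2 every component of $\lambda_0$ is nonzero. I expect to deduce $\operatorname{rank}C=k-1$ directly from local definiteness, forcing each eigenvalue to be simple. Should a direct argument prove elusive, the fallback is a deformation: the tuples of forms satisfying both Definiteness Conditions form a connected set along which the intersection number $N$ and the finiteness of the eigenvalue locus persist, so it suffices to exhibit one model tuple — for instance a decoupled or disjoint-interval example — whose $N$ eigenvalues are manifestly distinct, using that reality of the spectrum prevents eigenvalues from colliding without producing a higher-dimensional eigenspace, which Definiteness Condition 1 forbids. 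Having thus produced $N$ distinct real eigenvalues with one-dimensional eigenspaces, Theorem \ref{thm:multieig} yields the jointly orthogonal system, and the bijection between eigenvalues and eigenlines gives uniqueness up to rescaling and permutation, completing the proof of Theorem \ref{thm:main}.
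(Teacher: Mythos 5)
Your overall strategy coincides with the paper's: reformulate via Theorem \ref{thm:multieig}, count the intersection of the linear space $\{\sum_j\lambda_jA_j\}$ with the determinantal variety of rank-deficient $(n+k-1)\times(n+1)$ matrices (the paper does this through ``Shapiro's Lemma'', which is exactly your Bezout argument, with the degree $\binom{n+k-1}{k-1}$ quoted from Bruns--Vetter), and use Definiteness Condition 1 to show every eigenspace is one-dimensional by producing a degree $\leq n-1$ polynomial $v$ in a two-dimensional kernel and concluding $\skp{v^{(k)}}{v^{(k)}}=0$. All of that part of your write-up is correct and matches the paper.

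The gap is in the step you yourself flag as ``the main obstacle'': showing that every intersection point is reduced, equivalently that an eigenvalue of multiplicity $\geq 2$ forces an eigenspace of dimension $\geq 2$ (which the previous step then forbids). Your primary route --- deducing $\operatorname{rank}C=k-1$ ``directly from local definiteness'' --- is only announced, not carried out, and the paper does not do it this way. Your fallback is closer to the paper's argument but omits its essential mechanism. The paper does \emph{not} deform within the locus of tuples satisfying the Definiteness Conditions (your connectedness claim is unproved and unnecessary, and the explicit model in Appendix \ref{sec:exmultipar} does not satisfy the definiteness condition); instead it observes that the set of tuples with a multiple eigenvalue is Zariski closed with nonempty complement (witnessed by Corollary \ref{cor:HSeig} or Appendix \ref{sec:exmultipar}), takes a small generic perturbation $A_j^\epsilon=A_j+\epsilon B_j$ with simple eigenvalues, and then runs a compactness argument: the perturbed eigenlines $V_\epsilon,W_\epsilon$ have limit points as $\epsilon\to 0$, the orthogonality $\skp{v_\epsilon^{(k)}}{w_\epsilon^{(k)}}=0$ passes to the limit, and Definiteness Condition 1 (anisotropy of $\skp{\cdot}{\cdot}$ on rank-one tensors at $\epsilon=0$) forces the two limit lines to be \emph{distinct}, producing the required two-dimensional kernel. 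Your substitute justification --- ``reality of the spectrum prevents eigenvalues from colliding without producing a higher-dimensional eigenspace'' --- is precisely the assertion to be proved, and reality alone does not yield it. To close the proof you need either to actually establish the rank computation for $C$, or to supply the persistence-of-orthogonality-in-the-limit argument above.
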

\begin{proof}
 First let us show that there are $N:={ n+k-1 \choose k-1 }$ solutions, counted with multiplicity. For this we can use Shapiro's lemma, see Appendix \ref{sec:shapiro}.
Indeed, note that by the Definiteness Condition 2, there can be no eigenvalue $\lambda = (\lambda_1,\dots, \lambda_k)$ which has a zero component, and in particular none that has $\lambda_1=0$. It follows that $\lambda_2A_2+\dots+\lambda_k A_k$ is rank deficient iff all $\lambda_2=\lambda_3=\cdots=\lambda_k=0$. Hence the condition in Shapiro's Lemma is satisfied. Hence there are $N$ solutions counted with multiplicity. It remains to show that there are no solutions of multiplicity $>1$. 

First note that the kernel of $\sum_{j=1}^k\lambda_j A_j$ cannot have dimension $\geq 2$. Otherwise, we can arrive at contradiction to Definiteness Condition 1 as in the proof of Theorem \ref{thm:multieig}.

It remains to be shown that if $\lambda$ is an eigenvalue of multiplicity $\geq 2$, then the corresponding eigenspace is of dimension $\geq 2$.  The set of $A_j$'s for which there are multiple eigenvalues is Zariski closed, and there is a point in the complement by Corollary \ref{cor:HSeig}, or alternatively by the explicit computation of Appendix \ref{sec:exmultipar}. Hence we may always perturb the eigenvalue problem so as to lift the degeneracies of eigenvalues, so say we set $A_j^\epsilon= A_j+\epsilon B_j$. Then there are eigenvectors $v_\epsilon,w_\epsilon$ to the distinct perturbed eigenvalues, continuously depending on $\epsilon>0$, such that $v^{(k)}_\epsilon$, $w^{(k)}_\epsilon$ are orthogonal. To disregard the arbitrary multiplicative factor, we also consider the corresponding one-dimensional eigenspaces $V_\epsilon,W_\epsilon\in \PP^{n-1}$, which are uniquely defined. By compactness, they have to have limit points (as $\epsilon\to 0$), and there must be at least two different limit points by orthogonality of $V_\epsilon,W_\epsilon$. Suppose $v, w\neq 0$ are vectors in these limit points (subspaces). Then by continuity $\sum_j \lambda_j^\epsilon A_j^\epsilon$ must annihilate both $v$ and $w$ at $\epsilon=0$. Hence the kernel is at least $2$-dimensional.
\end{proof}

% \begin{cor}
%  Assuming Definiteness Conditions 1 and 2, the rank 1 Gram-Schmidt algorithm succeeds and finds an orthonormal basis of the space of symmetric polynomials.
% \end{cor}

\section{Orthogonal symmetric polynomials and the rank 1 Gram-Schmidt algorithm}\label{sec:gramschmidt}

In this section we show that the two conditions of Definition \ref{def:jointorthog} are slightly stronger than 
necessary to guarantee uniqueness.

\begin{prop}\label{prop:rank1unique}
Assume an inner product on the space of symmetric polynomials $V^k$ is given.
Suppose we are given two families of polynomials $\{E_\alpha\}_\alpha$, $\{E_\alpha'\}_\alpha$ with $E_\alpha, E_\alpha'\in V_n$, such that the families
$\{E_\alpha^{(k)}\}_\alpha$ and $\{(E_\alpha')^{(k)}\}_\alpha$ both form bases of $(V_{n-1}^{k})^\perp\subset V_n^k$. Then the two families are identical, up to rescaling and permutation of its members.
\end{prop}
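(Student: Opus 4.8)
The plan is to reduce the proposition to a single statement about rank one tensors contained in $W:=(V_{n-1}^k)^\perp$, and then to a finiteness/counting statement for those tensors. Since $\skp{}{}$ is an inner product and hence non-degenerate, $V_n^k=V_{n-1}^k\oplus W$. If $p\in V_n$ has $\deg p<n$ then $p^{(k)}\in V_{n-1}^k$, so any nonzero rank one tensor in $W$ is of the form $p^{(k)}$ with $\deg p=n$; in particular each $E_\alpha$ and each $E_\alpha'$ has degree exactly $n$. I claim the whole proposition follows from the assertion $(\star)$: every rank one tensor lying in $W$ is proportional to one of the $E_\alpha^{(k)}$. Granting $(\star)$, each $(E_\alpha')^{(k)}$ is a rank one tensor in $W$ and hence proportional to some $E_{\sigma(\alpha)}^{(k)}$; the assignment $\sigma$ is injective because the $(E_\alpha')^{(k)}$ are pairwise non-proportional (being a basis of $W$), and an injective self-map of the $N$-element index set, $N:=\dimens W$, is a bijection. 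This is exactly the statement that the two families coincide up to rescaling and permutation.

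To prove $(\star)$ I would reformulate it geometrically. By definition of $W$, and since the $q^{(k)}$ with $q\in V_{n-1}$ span $V_{n-1}^k$, a rank one tensor $p^{(k)}$ lies in $W$ if and only if $\skp{p^{(k)}}{q^{(k)}}=0$ for all $q\in V_{n-1}$. Projectively, the rank one tensors are the points of the image $X$ of the degree $k$ Veronese map $\PP(V_n)\to\PP(V_n^k)$, an irreducible variety of dimension $n$, and the rank one tensors in $W$ are exactly the points of $X\cap\PP(W)$. Since the $N$ points $[E_\alpha^{(k)}]$ already lie in this intersection, $(\star)$ is equivalent to the bound $\#\,(X\cap\PP(W))\le N$; that is, $W$ contains no rank one lines beyond those of the given family.

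Establishing this bound is the crux, and the step I expect to be the main obstacle. A generic linear space of the (large) codimension $\dimens V_{n-1}^k$ would miss the $n$-dimensional $X$ altogether, so the appearance of exactly $N$ points reflects the special position of $W$: its orthogonal complement $V_{n-1}^k$ is itself spanned by rank one tensors. I would exploit this by an apolarity/interpolation argument in the coefficients of $p$, showing that the $\dimens V_{n-1}^k$ equations $\skp{p^{(k)}}{q^{(k)}}=0$, $q\in V_{n-1}$, cut $X$ in a zero dimensional scheme of length exactly $N$, with non-degeneracy of $\skp{}{}$ ruling out a positive dimensional excess component. An alternative that stays closer to the machinery of the paper is a deformation argument: the locus of inner products for which $X\cap\PP(W)$ acquires more than $N$ rank one lines (or becomes positive dimensional) is Zariski closed, and its complement is nonempty because for the Heine--Stieltjes configuration of Section \ref{sec:examples2}, Theorem \ref{thm:multieig} together with the existence theorem of Section \ref{sec:multieig} exhibits exactly $N$ rank one tensors in $W$, corresponding to the $N$ distinct multiparameter eigenvalues (Corollary \ref{cor:HSeig}); upper semicontinuity then gives the bound $\le N$ in general. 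Once $(\star)$ is in hand, the reduction of the first paragraph finishes the proof, so the remaining work is entirely concentrated in this transversality and counting step.
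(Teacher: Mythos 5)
Your reduction to the claim $(\star)$ --- that every rank one tensor in $W=(V_{n-1}^k)^\perp$ is proportional to some $E_\alpha^{(k)}$ --- is correct and is exactly the statement the paper proves; the bookkeeping with the injective map $\sigma$ is fine. But the proof of $(\star)$ is where all the content lies, and you have not supplied it: you explicitly defer ``the transversality and counting step,'' and neither of the two strategies you sketch closes the gap. The apolarity route would require showing that the equations $\skp{p^{(k)}}{q^{(k)}}=0$, $q\in V_{n-1}$, cut the Veronese $X$ in a zero-dimensional scheme of length at most $N$; non-degeneracy of $\skp{}{}$ alone does not give this (linear sections of a Veronese of this codimension are generically empty, and special ones can meet it in excess or even positive-dimensional loci), so some argument specific to the hypothesis that $W$ admits a rank-one basis is needed, and none is given. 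The deformation route is worse off: the proposition is stated for an \emph{arbitrary} inner product on $V^k$ under the sole hypothesis that two rank-one bases of $W$ exist, whereas a degeneration to the Heine--Stieltjes configuration only moves within the much smaller family of inner products of the form \eqref{equ:bilink}; moreover one would have to show that the hypothesis (existence of a rank-one basis, hence non-emptiness of $X\cap\PP(W)$) and the bound persist along the path, which is not a consequence of semicontinuity alone.

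For comparison, the paper's argument for $(\star)$ is elementary and avoids any counting. Write $p^{(k)}=\sum_\alpha\lambda_\alpha E_\alpha^{(k)}$ (possible since the $E_\alpha^{(k)}$ are a basis of $W$), let $a_1,\dots,a_r$ be the complex roots of $p$ with multiplicities $m_1,\dots,m_r$, and apply the $n$ operators $\ev_{x_k=a_i}\frac{\p^j}{\p x_k^j}$, $j=0,\dots,m_i-1$, to both sides. The left side vanishes because $p$ vanishes to order $m_i$ at $a_i$, while the right side becomes $\sum_\alpha\lambda_\alpha c_{\alpha ij}E_\alpha^{(k-1)}$ with $c_{\alpha ij}$ determined by the derivatives of $E_\alpha$ at $a_i$. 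The decisive input is Remark \ref{rem:basis}: the operator $\frac{\p^n}{\p x_k^n}$ identifies $(V_{n-1}^k)^\perp$ with $V_n^{k-1}$ and sends $E_\alpha^{(k)}$ to a nonzero multiple of $E_\alpha^{(k-1)}$, so the $E_\alpha^{(k-1)}$ are linearly independent. Hence $\lambda_\alpha c_{\alpha ij}=0$ for all $\alpha,i,j$, forcing $\lambda_\alpha=0$ unless $E_\alpha$ has the same roots as $p$ with the same multiplicities, i.e.\ $E_\alpha\propto p$. If you want to salvage your write-up, you need either this linear-algebraic argument or a genuine proof of the intersection bound; as it stands the proposal identifies the right target but does not prove it.
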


The proof will be given in the subsequent section.

\begin{rem}
It follows that the rank 1 basis of $(V_{n-1}^k)^\perp$ in this case is essentially the set of rank one, norm one tensors in $(V_{n-1}^k)^\perp$. ``Essentially'' here means that if $v$ is of rank one and norm one, so is $-v$ of course, so we must pick one of $v$ or $-v$ for the basis.

In particular, the jointly orthogonal system in the sense of Definition \ref{def:jointorthog} is essentially the set of polynomials $v\in V_n$ such that $v^{(r)}\in (V_{n-1}^k)^\perp$.
\end{rem}

The following algorithm can hence be used to determine the (essentially unique) jointly orthogonal systems of each degree. 

\vskip .3cm

{\bf Rank 1 Gram-Schmidt Algorithm:}
\begin{enumerate}
 \item Initialization: A jointly orthogonal system of degree 0 is given by the constant polynomial $E_{0,1}:=1\in V_0$. Set $n=1$.
 \item Solve the system of homogeneous polynomial equations 
\[
 \skp{v^{(k)}}{E_{n-1,\alpha}^{(k)}}=0
\]
for $v\in V_n$ where $\alpha=1,2,\dots , { n+k-2 \choose k-1 }$.
Assuming that the inner product is of the form \eqref{equ:bilink} and assuming Definiteness Conditions 1 and 2 one finds ${ n+k-1 \choose k-1 }$ solutions up to rescaling. They become the jointly orthogonal system $\{E_{n,\alpha}\}_\alpha$ of degree $n$.
 \item Increase $n$ and go to step 2.
\end{enumerate}

\vskip .3cm

\subsection{Proof of Proposition \ref{prop:rank1unique}}

\begin{rem}\label{rem:basis}
 Note that a (non-orthogonal) basis of $V^k_n$ may be given by the monomial symmetric polynomials $m_\mu$ where $\mu=(\mu_1,\dots, \mu_k)$, $n\geq \mu_1\geq \mu_2 \geq \cdots \geq \mu_k \geq 0$ is a multiindex. Counting such multiindices, one sees that 
\[
 \dim V_n^k = \bpm n+k \\ k \epm.
\]
It follows that 
\[
\dim (V_{n-1}^k)^\perp = \bpm n+k \\ k \epm - \bpm n+k-1 \\ k \epm = \bpm n+k-1 \\ k-1 \epm = \dim V^{k-1}_n.
\]
Explicitly, a map $V^{k-1}_n\to (V^k_{n-1})^\perp$ may be defined
as the composition $V^{k-1}_n\to V^k_n \to (V^k_{n-1})^\perp$ where the first arrow sends 
\[
 m_{\mu_1, \dots, \mu_{k-1}} \mapsto  m_{n, \mu_1, \dots, \mu_{k-1}}
\]
and the second is the orthogonal projection.
An explicit isomorphism in the other direction $(V^k_{n-1})^\perp\to V^{k-1}_n$ is given by the operator $\frac{\p^n}{\p x_k^n}$. In particular, the images of the elements $E_\alpha^{(k)}$ of a rank one basis under this operator are non-zero multiples of 
\[
E_\alpha^{(k-1)} = E_\alpha(x_1)E_\alpha(x_2)\cdots E_\alpha(x_{k-1}).
\]
Hence it follows that the $E_\alpha^{(k-1)}$ form a basis of $V^{k-1}_n$.
\end{rem}

\begin{proof}[Proof of Proposition \ref{prop:rank1unique}]
Suppose there is some degree $n$ polynomial $p\in \C[x]$ such that $p^{(k)}:= p(x_1)\cdots p(x_k)$ is in  $(V^k_{n-1})^\perp$. We want to show that $p^{(k)}$ is a multiple of some $E_\alpha^{(k)}$. Since the $E_\alpha^{(k)}$ form a basis, we may write 
\[
 p^{(k)} = \sum_\alpha \lambda_\alpha E_\alpha^{(k)}
\]
for some constants $\lambda_\alpha$.
Let $a_1,\dots, a_r$ be the roots of $p$, with multiplicities $m_1,\dots, m_r$.
Let $\ev_{x_k=a}$ be the operator of evaluation at $x_k=a$ and apply the operators 
\[
 \ev_{x_k=a_i}  \frac{\p^j}{\p x_k^j}
\]
on both sides of the above equation for $i=1,2,\dots, r$, $j=0,1,\dots, m_r-1$.
We obtain $n$ equations of the form 
\[
 0 = \sum_\alpha \lambda_\alpha c_{\alpha ij} E_\alpha^{(k-1)}.
\]
for some constants $c_{\alpha ij}$ which are zero iff $E_\alpha$ has a root $a_i$ with multiplicity not equal to $j$. Since by Remark \ref{rem:basis} above the $E_\alpha^{(k-1)}$ are elements of a basis and hence linearly independent, we must have $\lambda_\alpha c_{\alpha ij}=0$ for each $\alpha, i, j$. Hence $\lambda_\alpha=0$ unless $E_\alpha$ has the same roots, with the same multiplicities, as $p$. But then $p\propto E_\alpha$.
\end{proof}

% 
% \begin{rem}
% It follows that the rank 1 basis of $(V_k^{n-1})^\perp$ in this case is essentially the set of rank one, norm one vectors in $(V_k^{n-1})^\perp$. ``Essentially'' here means that if $v$ is of rank one and norm one, so is $-v$ of course, so we must pick one of $v$ or $-v$ for the basis.
% \end{rem}

% The question of existence will be treated in section \ref{sec:multieig}. We will see that a rank 1 basis exists for inner products on $V_k$ of the form \eqref{equ:bilink}, if the Definiteness Conditions 1 and 2 hold.

Note that by Theorem \ref{thm:main} the Rank 1 Gram-Schmidt algorithm will succeed in finding the jointly orthogonal systems if the inner product on $V^k$ has the form \eqref{equ:bilink} and Definiteness Conditions 1 and 2 are satisfied. For a general inner product however, the algorithm might fail, as the following example shows.

\begin{ex}
For a general inner product on $V^k$ a rank 1 basis may or may not exist.
As an example, consider the $n=1$, $k=2$ case. Polynomials in this case may be identified with $2\times 2$ symmetric matrices. Fix a basis $e_1=x_1x_2$, $e_2=x_1+x_2$, $e_3=1$. Consider the inner product
\begin{align*}
\langle e_1 , e_1 \rangle &= \langle e_3 , e_3 \rangle = 1
&
\langle e_2 , e_2 \rangle &= \lambda 
\\
\langle e_1 , e_2 \rangle &= \langle e_2 , e_3 \rangle = 0
&
\langle e_1 , e_3 \rangle &= -\epsilon^2
\end{align*}
where $\lambda>0$, $0\leq \epsilon<1$.
The rank 1 vectors in the orthogonal complement of $e_3$ are $e_1\pm \epsilon e_2 + \epsilon^2 e_3$.
So in particular, there may be either 2 or only one, depending on $\epsilon$. 
Furthermore, these two vectors are orthogonal iff
\[
 1-\epsilon^4 - \lambda \epsilon^2 = 0,
\]
so while generically the vectors are not orthogonal, for a specific value of $\lambda$ they are.
\end{ex}

%We call the process of defining an orthogonal basis of $V_k$ by recursively extending an orthogonal basis of $V_k^{n-1}$ to one of $V_k^{n}$ by picking a rank one basis of $(V_k^{n-1})^\perp$ (if it exists and is orthogonal) the \emph{rank-1-Gram-Schmidt algorithm}. In case $k=1$ it reduces to the usual Gram-Schmidt algorithm. Note however, that in contrast to the usual Gram-Schmidt algorithm and in contrast to some higher dimensional generalizations in the literature, the rank-1-Gram Schmidt algorithm (i) may not always work or may fail to produce an orthogonal basis and (ii) if it works, at each step a polynomial set of equations has to be solved, not just a linear system.

% 
% \section{Examples}
% \subsection{Heine-Stieltjes polynomials as orthogonal polynomials}
% In general we do not know good conditions on the inner product ensuring the existence of a family $\{E_\alpha\}_\alpha$ as above. However, there is one interesting special case, that of Heine-Stieltjes polynomials.
% 
% In this case a family as above exists.
% A special case are the Lam\' e polynomials.
% 
% 
% \subsection{todo...}

\subsection{Proof of Proposition \ref{prop:firstsecondid}}
The forward implication of Proposition \ref{prop:firstsecondid} is the content of Lemmas \ref{lem:firstimpliessecond} and \ref{lem:northog} of section \ref{sec:properties}. 

For the reverse implication note that the by Proposition \ref{prop:rank1unique} the condition of Proposition \ref{prop:firstsecondid} determines the family $\{E_\alpha\}_\alpha$ uniquely up to rescaling and permutation, if such a family exists. However, by Theorem \ref{thm:main} and the forward implication we know that at least one such family exists, namely a jointly orthogonal system. Hence, by uniqueness, the family $\{E_\alpha\}_\alpha$ must be a jointly orthogonal system.
\hfill \qed

\appendix

\section{Shapiro's Lemma}\label{sec:shapiro}
%\subsection{The degree of the variety of rank deficient matrices}
%As has been shown in \cite{brunsvetter}, the degree of the variety $\mV$ of rank deficient $n\times m$ matrices ($n>m$) is ${ n \choose m-1 }$.
%We can do that by checking the intersection multiplicity with the linear subspace 
%\[
%\bpm
%x_1 & 0 & 0 & \dots & 0 & 0 \\
%x_2 & x_1 & 0 & \dots & 0 & 0 \\
%x_3 & x_2 & x_1 & \dots & 0 & 0 \\
%x_4 & x_3 & x_2 & \dots & 0 & 0 \\
%\vdots & & & \ddots & & \vdots \\
%0 & 0 & 0 & \dots & x_{n-m-1} & x_{n-m-2} \\
%0 & 0 & 0 & \dots & 0 & x_{n-m-1} \\
%\epm
%\]
%is ${ n \choose m-1}$. There is only one intersection point, namely $0$. We claim it has the stated multiplicity. 
%We have to check that
%\[
%{ n \choose m-1 } = \dim \C[x_1,\dots, x_{n-m-1}] / I =: R
%\]
%where $I$ is the ideal generated by the minors of the above matrix.
%We claim that $I$ may be identified with polynomials of total degree $\geq m$. 
%Taking minors which involve the first row, and an induction argument shows that all degree $m$ polynomials that are divisible by $x_1$ are contained in $I$. Hence one may set $x_1=0$. Again by induction the statement follows.

%\subsection{Proof of Shapiro's Lemma}
Let $\mV$ be the variety of rank deficient $n\times m$ matrices ($n>m$). It has been shown in \cite{brunsvetter} that the degree of the variety $\mV$ is ${ n \choose m-1 }$.

\begin{lemma}[Shapiro \cite{shapiro}]
Let $A, A_1, \dots, A_{n-m-1}$ be $n\times m$ matrices such that the subspace $W$ spanned by $A_1,\dots, A_{n-m-1}$ intersects $\mV$ only at zero. Then the variety $A+W$ intersects $\mV$ at exactly ${ n \choose m-1}$ points, counted with multiplicity.
\end{lemma}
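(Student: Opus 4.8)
The plan is to reduce the statement to a transversal degree computation in projective space, using the classical fact that a projective variety meets a linear subspace of complementary dimension in exactly $\deg$ points (counted with multiplicity) as soon as the intersection is proper.

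First I would pass to projective space. Since rank deficiency is invariant under scaling, $\mV \subset \C^{nm}$ is a cone, so its projectivization $\PP\mV \subset \PP^{nm-1}$ is a projective variety of codimension $c := n-m+1$ and degree ${n \choose m-1}$ (the latter by the cited result of \cite{brunsvetter}). Let $L := \C A + W \subset \C^{nm}$ be the linear span of $A$ and $W$, and $\Lambda := \PP L \subset \PP^{nm-1}$ its projectivization. The relevant numerical input is that $\dim \Lambda = c$: the spanning data is chosen precisely so that the projective span of $A$ together with $W$ has dimension equal to the codimension of $\PP\mV$. The affine subspace $A+W$ is then exactly the affine chart $\Lambda \setminus \PP W$, with $\PP W \subset \Lambda$ playing the role of the hyperplane at infinity.

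Next I would invoke Bezout. Since $\Lambda$ is a linear subspace of dimension $c = \operatorname{codim} \PP\mV$, it is an intersection of $c$ hyperplanes, so \emph{provided the intersection $\Lambda \cap \PP\mV$ is zero-dimensional}, Bezout's theorem gives that it consists of exactly $\deg \PP\mV = {n\choose m-1}$ points counted with multiplicity. To convert this projective count into the affine count of the lemma, I would use the hypothesis $W \cap \mV = \{0\}$, equivalently $\PP W \cap \PP\mV = \emptyset$: this says that none of the intersection points lie on the hyperplane at infinity $\PP W$, so all of them lie in the chart $A + W$. Hence $(A+W)\cap \mV$ and $\Lambda \cap \PP\mV$ have the same points with the same multiplicities, which yields the asserted number ${n\choose m-1}$.

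The main obstacle is establishing properness, i.e.\ that $\Lambda \cap \PP\mV$ is zero-dimensional, since Bezout's exact count fails for excess intersections. This is where the hypothesis does the real work: suppose for contradiction that $\Lambda \cap \PP\mV$ had an irreducible component $Z$ of dimension $\geq 1$. Viewing $Z \subset \Lambda \cong \PP^{c}$ and $\PP W \subset \Lambda$ as a hyperplane, the projective dimension theorem forces $Z \cap \PP W \neq \emptyset$. But $Z \subset \PP\mV$, so $Z \cap \PP W \subset \PP\mV \cap \PP W = \emptyset$, a contradiction. Therefore the intersection is proper and the degree count applies. I expect the only remaining subtlety to be bookkeeping: checking that the multiplicities arising in the affine chart coincide with the projective intersection multiplicities, which is immediate since passing to an affine chart containing all the intersection points is a local isomorphism near each of them.
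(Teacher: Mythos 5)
Your argument is correct and is essentially the paper's own proof: both projectivize, note that the hypothesis $W\cap\mV=\{0\}$ rules out intersection points on the hyperplane at infinity $\PP W$ of the chart $A+W\subset\PP(\C A+W)$, conclude that there can be no positive-dimensional component of the intersection (since such a component would have to meet that hyperplane), and then apply Bezout to the now-proper intersection to get $\binom{n}{m-1}$ points counted with multiplicity. The one point to flag is your assertion that $\dim\PP(\C A+W)$ equals the codimension $n-m+1$ of $\PP\mV$: this requires $W$ to be spanned by $n-m+1$ matrices, so the index $n-m-1$ in the statement is an off-by-two typo of the paper (the application uses $k-1$ spanning matrices against codimension $k-1$), and your reading of the intended dimension count is the correct one.
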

\begin{proof}
 First, intersections at the infinite plane correspond to rank deficient linear combinations of $A_1,\dots, A_{n-m-1}$. Hence by assumptions, there are no intersection loci on the infinite plane. Hence there can be no intersection loci of dimension $\geq 1$, since they would automatically contain infinite points. Hence all intersection loci are points. Bezout's Theorem then says that there are ${ n \choose m-1 }$ points, counted with multiplicity.
\end{proof}

\section{An example multiparameter eigenvalue problem}\label{sec:exmultipar}
Let us consider the following multiparameter eigenvalue problem:
\begin{equation}\label{equ:eigex}
 \bpm
\lambda_1 & 0 & 0 & \dots & 0 & 0 & \lambda_k \\
\lambda_2 & \lambda_1 & 0 & \dots & 0 & 0 & 0 \\
\lambda_3 & \lambda_2 & \lambda_1 & \dots & 0 & 0 & 0 \\
\lambda_4 & \lambda_3 & \lambda_2 & \dots & 0 & 0 & 0 \\
\vdots & & & \ddots & & & \vdots \\
\vdots & & & \ddots & & & \vdots \\
\vdots & & & \ddots & & & \vdots \\
0 & 0 & 0 & \dots & \lambda_{k} & \lambda_{k-1} & \lambda_{k-2}  \\
0 & 0 & 0 & \dots & 0 & \lambda_{k} & \lambda_{k-1} \\
\epm
v = 0
\end{equation}
where, as before, $\lambda=(\lambda_1, \dots, \lambda_k)$ is the multiparameter eigenvalue and $v$ is an $n+1$-vector, the multiparameter eigenvector. The eigenvalue problem is not symmetric as it stands, but may be put into symmetric form by reversing the order of the columns of the above matrix. The definiteness condition does not hold in this case in general.

We will show that the eigenvalue problem has a complete set of solutions, so there are ${n+\kappa \choose \kappa}$ distinct eigenvalues, where we set $\kappa:=k-1$.
They are given by the following construction:
\begin{enumerate}
 \item Pick $\kappa$ distinct $(n+\kappa)$-th roots of unity, call them $\zeta_1,\dots, \zeta_\kappa$. 
There are clearly ${n+\kappa \choose \kappa}$ choices, each of which will give us one eigenvalue and one eigenvector.
 \item Set $\lambda_j$ to be the coefficient of $x^{\kappa-j}$ in the polynomial $\prod_{i=1}^\kappa(x-\zeta_j)$. 
This defines the multiparameter eigenvalue.
Clearly, different choices of $\zeta_j$'s yield distinct eigenvalues.
 \item Let $v=(v_0,\dots, v_n)$. Then we set
\[
 v_j =
\det
\bpm
1 & 1  & \dots  & 1 \\
\bar \zeta_1 & \bar \zeta_2 &  \bar \dots &  \bar \zeta_\kappa \\
\bar \zeta_1^2 & \zeta_2^2 &  \dots &  \bar \zeta_\kappa^2 \\
\vdots &  & \ddots &  \vdots \\
\bar \zeta_1^{\kappa-1} & \bar \zeta_2^{\kappa-1} &  \dots &  \bar \zeta_\kappa^{\kappa-1} \\
\zeta_1^{j+1} & \zeta_2^{j+1} &  \dots &  \zeta_\kappa^{j+1} \\
\epm 
%\sum_{i=1}^\kappa p_i \zeta_i^k
\]
% where 
% \[
%  p_i = (-1)^{i-1} \prod_{\substack{r<s \\ r,s\neq i}}(\zeta) .
% \]
We may expand along the last row to write this alternatively as 
\begin{equation}\label{equ:powersum}
 v_j = \sum_i p_i \zeta_i^{j+1}
\end{equation}
where the $p_i$ are the appropriate minors. They are Vandermonde determinants and independent of $j$.
\end{enumerate}

\begin{lemma}
 The above $\lambda$, $v$ solve the eigenvalue problem \eqref{equ:eigex}.
\end{lemma}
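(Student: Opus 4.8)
The plan is to translate the matrix equation \eqref{equ:eigex} into a statement about divisibility of polynomials, and then to check that the explicit $\lambda$ and $v$ of the construction produce a factorization of $x^{n+\kappa}-1$. To this end I encode the eigenvector as the polynomial $v(x)=\sum_{j=0}^n v_j x^j$ and introduce the symbol $\Lambda(x)=\sum_{j=1}^k \lambda_j x^{j-1}$, a polynomial of degree $\kappa$ with leading coefficient $\lambda_k$. First I would read off the rows of the matrix: every row except the first computes one coefficient $w_s$ (for $s=1,\dots,n+\kappa-1$) of the product $\Lambda(x)v(x)=\sum_s w_s x^s$, while the single corner entry $\lambda_k$ in the top right folds the top coefficient $w_{n+\kappa}=\lambda_k v_n$ back onto the constant term. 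Hence $Mv=0$ is equivalent to $w_1=\dots=w_{n+\kappa-1}=0$ together with $w_0+w_{n+\kappa}=0$; and since $\deg(\Lambda v)\le n+\kappa$, this is in turn equivalent to $\Lambda(x)v(x)=c\,(x^{n+\kappa}-1)$ for some constant $c$. It therefore suffices to prove this single identity for the constructed data.

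Next I would record the two ways in which the construction is tailored to this identity. The coefficients $\lambda_j$ are chosen (reading the coefficients of $\prod_{i=1}^\kappa(x-\zeta_i)$ in the opposite order, which is why the conjugates $\bar\zeta_i=\zeta_i^{-1}$ appear below) so that the roots of $\Lambda$ are exactly the $\kappa$ distinct $(n+\kappa)$-th roots of unity $\bar\zeta_1,\dots,\bar\zeta_\kappa$, i.e.\ $\Lambda(\bar\zeta_i)=0$ for all $i$. On the other side, I would extend the formula \eqref{equ:powersum} to a bi-infinite sequence $\tilde v_j:=\sum_i p_i \zeta_i^{j+1}$, $j\in\mathbb Z$, which agrees with $v_j$ for $0\le j\le n$. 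The crucial point is that the full convolution telescopes: $\sum_{a=0}^\kappa \lambda_{a+1}\tilde v_{s-a}=\sum_i p_i \zeta_i^{s+1}\Lambda(\bar\zeta_i)=0$ for every $s$, precisely because each $\bar\zeta_i$ is a root of $\Lambda$.

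The genuine coefficient $w_s$ differs from this vanishing infinite convolution only by the \emph{phantom} terms whose index lies outside $\{0,\dots,n\}$. Here I would use the determinant form defining $v_j$: for $j=-1,-2,\dots,-(\kappa-1)$ the last row $(\zeta_i^{j+1})_i$ coincides with one of the upper rows $(\bar\zeta_i^{\,m})_i$, so the determinant has a repeated row and $\tilde v_{-1}=\dots=\tilde v_{-(\kappa-1)}=0$; by the periodicity $\zeta_i^{n+\kappa}=1$ the same holds for $\tilde v_{n+1}=\dots=\tilde v_{n+\kappa-1}=0$, while $\tilde v_{-\kappa}=\tilde v_n$ and $\tilde v_{n+\kappa}=\tilde v_0$ are the only surviving boundary values. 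Tracking which phantom indices can occur in each row then shows that all defects cancel for $1\le s\le n+\kappa-1$, so $w_s=0$ there, whereas the surviving defects give $w_0=-\lambda_k v_n$ and $w_{n+\kappa}=\lambda_k v_n$. The identity $\lambda_1 v_0=-\lambda_k v_n$ needed for the first of these is itself the $s=0$ instance of the vanishing infinite convolution. Thus $w_0+w_{n+\kappa}=0$ and $\Lambda(x)v(x)=c\,(x^{n+\kappa}-1)$, which by the first paragraph yields $Mv=0$.

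I expect the bookkeeping of the previous paragraph to be the only real obstacle: one must verify carefully that, among all phantom indices, exactly $-1,\dots,-(\kappa-1)$ and their periodic images $n+1,\dots,n+\kappa-1$ are forced to vanish by the repeated-row argument, while the two indices $-\kappa\equiv n$ and $n+\kappa\equiv 0$ carry the single compensating fold handled by the corner entry $\lambda_k$. The reformulation of the first paragraph and the telescoping identity of the second are routine, and Cramer's-rule type manipulations, as already invoked in Proposition \ref{prop:eigval}, suffice for the rest.
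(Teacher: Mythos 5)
Your proof is correct and is essentially the paper's own argument: both reduce the matrix equation to the componentwise recursion with boundary conditions, observe that the power-sum Ansatz \eqref{equ:powersum} satisfies the recursion because the $\zeta_i$ are roots of the characteristic polynomial, and verify the boundary terms via the repeated-row property of the determinant and the periodicity $\zeta_i^{n+\kappa}=1$. The only difference is cosmetic: you package the recursion as the polynomial identity $\Lambda(x)v(x)=c\,(x^{n+\kappa}-1)$, which is a clean equivalent formulation but not a different proof.
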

\begin{proof}
Componentwise, the eigenvalue problem are the equations 
\begin{equation}\label{equ:exrec}
 \sum_{i=1}^k \lambda_i v_{j+1-i}=0
\end{equation}
for $j=0,1,\dots, n+\kappa-1$, where we set 
\begin{align}\label{equ:bdry}
 v_{-1}&=\cdots=v_{-\kappa+1}=0
&
 v_{n+1}&=\cdots=v_{n+\kappa-1}=0 
&
 v_{-\kappa}&=v_n.
\end{align}
We call these latter equations the \emph{boundary conditions}.

It is well known that the Ansatz \eqref{equ:powersum} (for arbitrary $p_i$) solves the recursion \eqref{equ:exrec}, if the $\zeta_i$ are the roots of the characteristic polynomial 
\[
 \sum_{i=1}^k \lambda_i^{n-i} x^i.
\]
In our case they are by definition of $\lambda$. However, we need to verify that the boundary conditions \eqref{equ:bdry} are also respected by the Ansatz \eqref{equ:powersum}.
For the first set of equations in \eqref{equ:powersum} note that $\zeta_i^{j+1}=\bar \zeta_i^{-j-1}$, and hence the determinant vanishes for $j=-1,\dots, -\kappa+1$. Similarly, since the $\zeta_i$ are $(n+\kappa)$-th roots of unity, the same holds for $j=n+1, \dots, n+\kappa-1$. 
Again because $\zeta_i^{-\kappa+1}=\zeta_i^{n+1}$, the last equation of \eqref{equ:powersum} holds as well.  
\end{proof}

\begin{rem}
 The system of eigenvectors obtained can be seen as a multiparameter generalization of the Fourier basis, to which they reduce for $k=2$.
\end{rem}

\section{Symmetric rectangular multiparameter eigenvalue problems revisited}\label{sec:symmmultpar2}
In section \ref{sec:symmmultpar} above we gave an ad hoc definition of symmetry for a multiparameter eigenvalue problem in the form \eqref{equ:smultpar}. While this definition suffices for the purposes of the present work, it is conceptually not satisfying and not general enough in many relevant cases. In this appendix we discuss a more general and formally more convincing definition.

Let $A_1,\dots, A_k:V\to W$ be linear operators between the $m$ dimensional vector space $V$ and the $(m+k-2)$-dimensional vector space $W$. A rectangular multiparameter eigenvalue problem is the equation
\begin{equation}
 \label{equ:smultpar2}
 \sum_{i=1}^k \lambda_i A_i v = 0
\end{equation}
in complex unknowns $(\lambda_1,\dots, \lambda_k)\neq 0$ (the multiparameter eigenvalue, defined up to a multiplicative constant) and $0\neq v\in V$ (the eigenvector).
%Eqn. \eqref{equ:smultpar} is overdetermined and may have solutions or not.

%Suppose $V, W$ are real vector spaces.
A \emph{symmetric} rectangular multiparameter eigenvalue problem is a rectangular multiparameter eigenvalue problem together with the additional data of $k$ ``dual'' operators $B_1,\dots, B_k:V\to W^*$, such that the the bilinear forms 
\[
(u,v)\mapsto B_i(u) \cdot A_j(v)
\]
are symmetric.% and non-degenerate.

% Similarly, for $V, W$ complex, a Hermitian rectangular multiparameter eigenvalue problem is defined by providing in addition antilinear 
% operators $B_1,\dots, B_k:V\to W^*$, such that the sesquilinear forms 
% \[
% (u,v)\mapsto B_i(u) \cdot A_j(v)
% \]
% are Hermitian and non-degenerate. 

Let $M(u,v)$ be the $k\times k$ matrix with $i,j$-entry $B_i(u) \cdot A_j(v)$. 
If $u$ is a solution to the multiparameter eigenvalue problem for some eigenvalue $\lambda$, then $\lambda$ is a right nullvector of $M(u,u)$. If $M(u,u)$ is of corank 1, this property determines $\lambda$ up to scale.

If $u$ and $v$ are solutions to the multiparameter eigenvalue problem for distinct eigenvalues $\lambda, \lambda'$, then both $\lambda$ and $\lambda'$ are right nullvectors of $M(u,v)$. Hence all $(k-1)\times (k-1)$ minors of $M(u,v)$ vanish. This can be considered as a joint orthogonality condition for $u^{(k-1)}, v^{(k-1)}\in S^kV$.

For each symmetric multiparameter eigenvalue problem one may define a dual problem, obtained by exchanging the role of $W$ and $W^*$, and that of the $A_i$ and $B_i$.
If $u$ is a solution to the dual multiparameter eigenvalue problem for some eigenvalue $\mu$, then $\mu$ is a left nullvector of $M(u,u)$.
If $u,v$ are solutions to the dual multiparameter eigenvalue problem with eigenvalue $\mu, \mu'$, then $\mu, \mu'$ are left nullvectors of $M(u,v)$. In particular the joint orthogonality of $u^{(k-1)}, v^{(k-1)}$ with respect to the bilinear forms defined by the $(k-1)\times (k-1)$ minors of $M(u,v)$ follows again. 

\begin{ex} 
Consider the Heine-Stieltjes equation \eqref{equ:HS}. It is a rectangular multiparameter eigenvalue problem with $A_1=L-n(n-1+\sum_j m_j)x^k, A_2=x^{k-1}, A_3=x^{k-2},\cdots, A_k=1$. 
Here $V=V_n$ and $W=V_{n+k-2}$.
The eigenvalue problem is symmetric, with the dual operators $B_i$ being defined by the scalar products:
\begin{gather*}
 B_i : V \mapsto W^* \\
v \mapsto (w\mapsto \skp{v}{w}_i ).
\end{gather*}
The dual rectangular eigenvalue problem is equation \eqref{equ:eigval}, whose solutions (i.~e., the eigenvectors) form a jointly orthogonal system.
\end{ex}

The results of this paper may be interpreted as saying that under suitable technical conditions the solutions (eigenvectors) of a symmetric rectangular eigenvalue problem are equal to those of the dual problem.

\nocite{holstshapiro}
\bibliographystyle{plain}
\bibliography{poly}

\end{document}